\newtheorem{theo}{Theorem}[section]
\newtheorem{lemma}[theo]{Lemma}
\newtheorem{rema}[theo]{Remark}
\newtheorem{prop}[theo]{Proposition}
\newtheorem{cor}[theo]{Corollary} 
\theoremstyle{definition}
\theoremstyle{remark}
\newcommand{\matN}{\ensuremath {\mathbb{N}}}
\newcommand{\matQ}{\ensuremath {\mathbb{Q}}}
\newcommand{\matZ} {\ensuremath {\mathbb{Z}}}
\renewcommand{\epsilon}{\ensuremath \varepsilon}
\renewcommand{\bar}[1]{\ensuremath \overline{#1}}
\begin{document}
 \title[]{
\textbf{Sparsity of p-divisible unramified liftings for subvarieties of abelian varieties with trivial stabilizer}
}
\author{DANNY SCARPONI} 
\thanks{Danny Scarponi, Fakultät für Mathematik, Universität Regensburg, 93040 Regensburg.
Danny.Scarponi@mathematik.uni-regensburg.de}
\subjclass{14K12, 14K15}





\maketitle

 
  
  
  
\begin{abstract}
By means of the theory of strongly semistable sheaves and of the theory of the Greenberg transform, we generalize to higher dimensions a 
result on the sparsity of $p$-divisible unramified 
liftings which played a crucial role in Raynaud's proof of the Manin-Mumford conjecture for curves. We  also give a bound 
for the number of irreducible components of the first critical scheme of   subvarieties of an abelian variety which are complete intersections.
 \tableofcontents


\smallskip
\noindent \textbf{Keywords.} Manin-Mumford conjecture, number fields, $p$-divisible unramified liftings, Greenberg transform, strongly semistable sheaves. 
\end{abstract}

\section{Introduction}

 The Manin-Mumford conjecture is a significant question concerning the intersection of a subvariety $X$ of an abelian variety $A$ with the group of torsion points of $A$. Raised independently by Manin and Mumford, the conjecture was originally formulated in the case of curves. Suppose that $A$ is an abelian 
 variety over a number field $K$ and that $C$ is a 
 smooth  subcurve of $A$ of genus at least two. 
Then  only finitely many torsion points of $A(\bar K)$ lie in $C$. 
In 1983, Raynaud proved this conjecture and generalized it to higher dimensions: if $A/K$ is as above and $X/K$ is a smooth subvariety of $A$ which does not contain any
translate of a non-trivial abelian subvariety,
then 
the set of torsion points of $A(\bar K)$ lying in $X$ is finite (cf. \cite{courbes}, \cite{rayn2}). 

%
Let us fix $K, X$ and $A$ as above. 
%
Let 
$U$ be a nonempty open subscheme of $\text{Spec} \ \mathcal{O}_K$ not containing any ramified prime and such that $A/K$ extends to an abelian scheme $ \mathcal{A}/U$ and $X$ extends
       to a smooth closed integral subscheme $\mathcal{X}$ of $\mathcal{A}$. For any $\mathfrak{p}\in U$, let $R$ (resp. $R_n$) be the ring of Witt vectors (resp. of length $n+1$) 
       with coordinates in the algebraic closure 
$\bar{k(\mathfrak{p})}$ of the residue field of $\mathfrak{p}$. Recall that $R$ is a DVR with maximal ideal generated by $p$ such that $R_0=R/p=\bar{k(\mathfrak{p})}$. 
Denote by $X_{\mathfrak{p}^n}$ (resp. $A_{\mathfrak{p}^n}$) the $R_n$-scheme 
$\mathcal{X} \times_U \text{Spec} \ R_n$ (resp. $\mathcal{A} \times_U \text{Spec} \ R_n$) and consider 
 the reduction map 
\begin{equation}\label{sparcur} p A_{\mathfrak{p}^1}(R_1)\cap
 X_{\mathfrak{p}^1}(R_1)\rightarrow X_{\mathfrak{p}^0}(R_0). \end{equation} 
In \cite{courbes}, it is showed that, if $X$ is a curve, the image of (\ref{sparcur}) is not Zariski dense in
$X_{\mathfrak{p}^0} $, i.e. it  is a finite set. This local result is crucial in Raynaud's proof of the Manin-Mumford
conjecture for curves, since it easily implies that only finitely many prime-to-$p$ torsion points of $A(\bar K)$ lie on $X$ (cf. Théorème II in 
\cite{courbes}). 
 
It is quite natural to expect that a similar result  also holds in higher dimensions. More explicitely, one can ask: is it  true 
that, if a smooth subvariety $X$ of $A$ does  not contain any translate of a non-trivial abelian subvariety,
the image of (\ref{sparcur})
is not Zariski dense? In this paper we give a positive answer to this question (cf. Theorem \ref{torre}). 
\begin{theo}[Sparsity of $p$-divisible unramified liftings] \label{torreintro}
Suppose that $X$ has trivial stabilizer. For all $\mathfrak{p}\in   U$  above a prime  $p>(\dim X)^2 \deg(\Omega_X) $ and such that  $X_{\mathfrak{p}^0} $ has trivial 
stabilizer, the image of 
\[
  p A_{\mathfrak{p}^1}(R_1)\cap
 X_{\mathfrak{p}^1}(R_1)\rightarrow X_{\mathfrak{p}^0}(R_0)
\]
is not Zariski dense in $X_{\mathfrak{p}^0}$.
\end{theo}
Here $\deg(\Omega_X)$ refers to the degree of the cotangent bundle $\Omega_X$ computed with respect to any fixed very ample line bundle on $X$. 

Notice that if  $X$ does not contain any  translate of a non-trivial  abelian subvariety, then it has finite stabilizer. Therefore,
 replacing $A$ and $X$ with their quotients by the stabilizer of $X$, one can suppose to be in the case of a trivial   stabilizer (cf. beginning of the next section for the definition 
of stabilizer). 

A different generalization of Raynaud's local result was  given in \cite{MMML}, where R\"ossler proved that, if the torsion points 
of $\mathcal{A}(\overline{\text{Frac}(R)})$ are not dense in $\mathcal{X}(\overline{\text{Frac}(R)})$, then 
for $m$ big enough the image of 
\begin{equation}\label{high}
 p^m A_{\mathfrak{p}^m}(R_m)\cap
 X_{\mathfrak{p}^m}(R_m)\rightarrow X_{\mathfrak{p}^0}(R_0)
\end{equation}
is not Zariski dense in $X_{\mathfrak{p}^0}$ (cf. Th. 4.1 in \cite{MMML}). Theorem \ref{torreintro} makes R\"ossler's result effective, showing that if the stabilizer of $X$ is trivial, then 
it is sufficient
to consider the map (\ref{high}) for 
$m=1$.

The proof of Theorem \ref{torreintro} strongly relies on   R\"ossler's paper  \cite{rosstrongly} and is done by contradiction.
First we use some basic properties of the  Greenberg transform to show  that, 
if the image of (\ref{sparcur}) is Zariski dense in $X_{\mathfrak{p}^0}$, then the absolute Frobenius  $ F_{X_{\mathfrak{p}^0}}:X_{\mathfrak{p}^0}\rightarrow X_{\mathfrak{p}^0} $ 
lifts to an endomorphism of $ X_{\mathfrak{p}^1} $. A well-known consequence of
this liftability 
is  the existence of a map of sheaves of differentials 
$F_{X_{\mathfrak{p}^0}}^*\Omega_{X_{\mathfrak{p}^0}}\rightarrow \Omega_{X_{\mathfrak{p}^0}}$
 which is nonzero.
In the case that  $X$ is a curve, such a map cannot exist, 
since $\deg(F_{X_{\mathfrak{p}^0}}^*\Omega_{X_{\mathfrak{p}^0}})$ is strictly bigger than 
$\deg(\Omega_{X_{\mathfrak{p}^0}})$. This simple observation was in fact used    by Raynaud to prove  Lemma I.5.4 in \cite{around}.
By means of the theory of strongly semistable sheaves developed by  R\"ossler in  \cite{rosstrongly}, we show that 
also when $X$ has dimension higher than one, there are no nontrivial maps from $F_{X_{\mathfrak{p}^0}}^*\Omega_{X_{\mathfrak{p}^0}}$
to $\Omega_{X_{\mathfrak{p}^0}}$.  This gives us the wanted contradiction.

In the last section of this paper, we consider subvarieties of abelian varieties which are complete intersections. If $\text{Gr}_1$ denotes the Greenberg transform of level 1 (cf.
section 3), then we know that the first critical scheme \[\text{Crit}^1(\mathcal{X},\mathcal{A}):=[p]_* \text{Gr}_{1}(A_{\mathfrak{p}^{1}})\cap 
       \text{Gr}_{1}(X_{\mathfrak{p}^{1}}) \] is a scheme over $R_0$ such that \[\text{Crit}^1(\mathcal{X},\mathcal{A})(R_0)= 
       p A_{\mathfrak{p}^1}(R_1)\cap
 X_{\mathfrak{p}^1}(R_1).\] 
Using exactly the same technique that allowed Buium 
to give an effective form of the  Manin-Mumford
conjecture in the case of curves (\cite{buium}), we get a bound for the number of irreducible  components of $\text{Crit}^1(\mathcal{X},\mathcal{A}) $ when $X$ is a complete intersection
(not necessarily with trivial stabilizer).
\begin{theo} \label{effintro}
 Let $K$ be a number field, $A/K$ be an abelian variety of dimension $n$ and let $L$ be a very ample line bundle on 
$A$. Let $c\in \matN$ be positive and  let $H_1$, $H_2, ...,H_c \in 
|L|$ be general. Suppose that $X:=H_1 \cap H_2 \cap ... \cap H_c$ is smooth. 
There exists a nonempty open subscheme $V\subseteq \textnormal{Spec} \ \mathcal{O}_K$  (see beginning of Section 6 for its definition) such that if $\mathfrak{p}\in V$,
the number of irreducible components of $\textnormal{Crit}^1(\mathcal{X},\mathcal{A}) $ is bounded by 
\[
 p^{2n}\left(\sum_{h=0}^{n-c}{2n-2c\choose h}{c \choose n-c-h}p^{n-c-h} \right) (L^n)^2.
\]
\end{theo}
Here $(L^n) $ denotes the intersection number of $L$.

We conclude the Introduction with the following remark.  Since the field of definition of points in the prime-to-p torsion $\textnormal{Tor}^p(A(\bar K))$ is unramified 
at $\mathfrak{p}$ and 
 the specialization map $\mathcal{A}(R)\rightarrow 
 A_{\mathfrak{p}^1}(R_1)$ is injective on the prime-to-$p$ torsion, we have an injection
\[
 \textnormal{Tor}^p(A(\bar K))\cap X(\bar K)\subseteq p A_{\mathfrak{p}^1}(R_1)\cap X_{\mathfrak{p}^1}(R_1).
\]
This implies that,
if $X$ is a complete intersection such that $\textnormal{Crit}^1(\mathcal{X},\mathcal{A})(R_0) $ is finite, then the bound in 
Theorem \ref{effintro}  is a bound for the cardinality of $\textnormal{Tor}^p(A(\bar K))\cap X(\bar K)$, i.e  an effective
form of the Manin-Mumford conjecture for the prime-to-$p$ torsion. 
\section{Notations}
 We fix the following notations:
\begin{itemize}
 \item $K$ a number field,
 \item $\bar K$ an algebraic closure of $K$,
 \item $A/K$ an abelian variety,
 \item $X\subseteq A$ a closed integral subscheme, smooth over $K$,
 \item $\text{Stab}_A(X)$ the translation stabilizer of $X$ in $A$, i.e. the closed subgroup scheme of $A$ characterized
 uniquely by the fact that for any $K$-scheme $S$ and any morphism $b: S\rightarrow A$, translation by $b$ on the product
 $A\times_K S$ maps the subscheme $X\times_K S$ to itself if and only if $b$ factors through $\text{Stab}_A(X)$ (for its existence we refer the reader to
 Ex. 6.5(e), Exp. VIII in \cite{schemas}),
 \item $U$ an open subscheme of $\text{Spec} \ \mathcal{O}_K$ not containing any ramified prime and such that $A/K$ extends to an abelian scheme $ \mathcal{A}/U$ and $X$ extends
       to a smooth closed integral subscheme $\mathcal{X}$ of $\mathcal{A}$.
\end{itemize}
For any prime number $p$, any unramified prime $\mathfrak{p}$ of $K$ above $p$ and any $n\geq 0$, we denote by:
\begin{itemize}
 \item $k(\mathfrak{p})$ the residue field $\mathcal{O}_K / \mathfrak{p} $ for $\mathfrak{p}$,
 \item $K_\mathfrak{p}$ the completion of $K$ with respect to $\mathfrak{p}$,
 \item $\widehat{K_\mathfrak{p}^{\text{unr}}}$ the completion of the maximal unramified extension of $K_\mathfrak{p}$,
 \item $R:=W(\bar{k(\mathfrak{p})})$ (resp. $R_n:=W_n(\bar{k(\mathfrak{p})})$) the ring of Witt vectors (resp. the ring of Witt vectors of length $n+1$) 
       with coordinates in $\bar{k(\mathfrak{p})}$. We recall that $R$ can be identified with the ring of integers of $\widehat{K_\mathfrak{p}^{\text{unr}}}$ and 
       $R_0$ with $\bar{k(\mathfrak{p})}$,
 \item $X_{\mathfrak{p}^n}$ the $R_n$-scheme $\mathcal{X} \times_U \text{Spec} \ R_n$ \\
       $A_{\mathfrak{p}^n}$ the $R_n$-scheme $\mathcal{A} \times_U \text{Spec} \ R_n$.
\end{itemize}

\section{The Greenberg transform and the critical schemes}
Now we recall some basic facts about the Greenberg transform (for more details, see \cite{greenb1}, \cite{greenb2} and [\cite{bosch} p. 276-277]).

Fix a prime number $p$ and an unramified prime $\mathfrak{p}$ of $K$ above $p$.

For any $n\geq 0$, the Greenberg transform of level $n$ is a covariant functor $\text{Gr}_n$ from the category of $R_n$-schemes locally of finite type, 
to the category
of $R_0$-schemes locally of finite type. If $Y_n$ is an $R_n$-scheme locally of finite type,
$\text{Gr}_n(Y_n)$ is a $R_0$-scheme with the property \[ Y_n(R_n)=\text{Gr}_n(Y_n)(R_0).\]
More precisely, we can interpret $R_n$ as the set of $\bar{ k(\mathfrak{p})}$-valued points of a ring scheme $\mathscr{R}_n$ over $\bar {k(\mathfrak{p})}$.
For any $R_0 $-scheme $T$, we define $\mathbb{W}_n(T)$ as the ringed space over 
$R_n$ consisting of $T$ as a topological space and of $\textnormal{Hom}_{R_0}(T,\mathscr{R}_n) $ as a structure sheaf. 
By definition $\text{Gr}_n(Y_n)$ represents the functor from the category of schemes over $R_0$ to
the category of sets given by
\[
 T\mapsto \textnormal{Hom}_{R_n}(\mathbb{W}_n(T),Y_n)
\]
where $\textnormal{Hom}$ stands for homomorphisms of ringed spaces. In other words, the functor $\text{Gr}_n $ is right adjoint to the functor $\mathbb{W}_n$.

The functor
$\textnormal{Gr}_n$ respects closed immersions, open immersions, fibre products, smooth, \'etale morphisms and is the identity for $n=0$. Furthermore it sends group schemes over $R_n$ to
group schemes over $R_0$. The canonical morphism $R_{n+1}\rightarrow R_n$
gives rise to a functorial transition morphism $\pi_{n+1}:\text{Gr}_{n+1}\rightarrow \text{Gr}_{n}$.

Let  $Y_{n}$ be a scheme  over $R_{n}$ locally of finite type. Then for any $m< n$ we define 
\[
 Y_m:=Y_{n}\times_{R_{n}}  R_m.
\]
Let us call $F_{Y_0}:Y_0\rightarrow Y_0$ the absolute Frobenius endomorphism of $Y_0$ and $\Omega_{Y_0 / R_0}$ the sheaf of relative differentials.

For any finite rank locally free sheaf $\mathscr{F}$ over $Y_0$   we will write
\[
 V\left(\mathscr{F}\right):=\underline{\rm{Spec}} \left(\textnormal{Sym}\left(\mathscr{F}^\vee\right)\right)
\]
for the vector bundle over $Y_0$ associated to $\mathscr{F}$.

Suppose now that $Y_{n}$ is smooth over $R_{n}$, so that $\Omega_{Y_0/R_0}$ is locally free.
A key result about the Greenberg transform is the following fact (cf. Section 2 in  \cite{greenb2}): 
\[
 \pi_1: \textnormal{Gr}_{1}(Y_{1})\rightarrow  \textnormal{Gr}_{0}(Y_0)=Y_0
\]
is a torsor under the Frobenius tangent bundle
\[
 V\left(F_{Y_0}^{*}\Omega^\vee_{Y_0/R_0}\right).
\]
Let $X$, $A$, $\mathcal{X}$, $\mathcal{A}$ and $U$ be as fixed in the previous section and suppose that $\mathfrak{p}\in U$. We refer the reader to Section II.1 in \cite{around}
for more details on what we will recall from now till the end of the section. For any $n\geq 0$, the kernel of
\[
 \textnormal{Gr}_{n}(A_{\mathfrak{p}^{n}})\rightarrow  \textnormal{Gr}_{0}(A_{\mathfrak{p}^{0}})=A_{\mathfrak{p}^{0}}
\]
is unipotent, killed by $p^n$. Thus, the scheme-theoretic image $ [p^n]_* \textnormal{Gr}_{n}(A_{\mathfrak{p}^{n}})$ of the multiplication by 
$p^n$ in $\textnormal{Gr}_{n}(A_{\mathfrak{p}^{n}}) $ is the
greatest abelian subvariety of  $\textnormal{Gr}_{n}(A_{\mathfrak{p}^{n}}) $ and, since $R_0 $ is algebraically closed, 
$ [p^n]_* \textnormal{Gr}_{n}(A_{\mathfrak{p}^{n}})(R_0)=p^n \textnormal{Gr}_{n}(A_{\mathfrak{p}^{n}})(R_0 )$.

We define the $n$-critical scheme as
\[\text{Crit}^n(\mathcal{X},\mathcal{A}):=[p^n]_* \text{Gr}_{n}(A_{\mathfrak{p}^{n}})\cap 
       \text{Gr}_{n}(X_{\mathfrak{p}^{n}}).
\]
      Notice  that $\text{Crit}^n(\mathcal{X},\mathcal{A})$
       is a scheme over $R_0$ and that $\text{Crit}^0(\mathcal{X},\mathcal{A})=X_{\mathfrak{p}^0}$.

 The transition morphisms $\pi_{n+1}:\textnormal{Gr}_{n+1}(A_{\mathfrak{p}^{n+1}})\rightarrow  \textnormal{Gr}_{n}(A_{\mathfrak{p}^{n}})$ lead to a
 projective system of
  $R_0 $-schemes:
 \[
  \cdots \rightarrow \text{Crit}^2(\mathcal{X},\mathcal{A}) \rightarrow 
  \text{Crit}^1(\mathcal{X},\mathcal{A}) \rightarrow \text{Crit}^0(\mathcal{X},\mathcal{A})=X_{\mathfrak{p}^0}
 \]
 whose connecting morphisms are both affine and proper, hence finite.
In fact, transition morphisms are  affine and  
 the subscheme
$[p^n]_* \textnormal{Gr}_{n}(A_{\mathfrak{p}^{n}})$ is proper, being the greatest abelian subvariety of 
$\textnormal{Gr}_{n}(A_{\mathfrak{p}^{n}})$. 

We shall write $\text{Exc}^n(\mathcal{X}, \mathcal{A})$ for the scheme theoretic image of the morphism
$\text{Crit}^n(\mathcal{X},\mathcal{A}) \rightarrow  X_{\mathfrak{p}^0}$.
\section{The geometry of vector bundles in positive characteristic}
In this section we recall  some results on the geometry of vector bundles in positive characteristic by Langer (\cite{langersem}) and R\"ossler (\cite{rosstrongly}).  These results 
will play a crucial role in the proof of Lemma \ref{p} and Theorem \ref{torre}.

Let us start with some basic definitions and facts regarding semistable sheaves in positive characteristic.

Let $Y$ be a smooth projective variety over an algebraically closed field $l_0$ of positive characteristic. We write as 
before
$\Omega_{Y/l_0}$ for the sheaf of differentials of $Y$ over $l_0$ and $F_Y:Y\rightarrow Y$ for the absolute Frobenius endomorphism of $Y$. 
Now let $L$ be a very ample line bundle on $Y$. If $V$ is a torsion free coherent sheaf on $Y$, we shall write 
\[
 \mu(V)=\mu_L(V)=\textnormal{deg}_L(V)/\textnormal{rk}(V)
\]
for the slope of $V$ (with respect to $L$). Here $\textnormal{rk}(V)$ is the rank of $V$, i.e. the dimension of the stalk of $V$ at the generic point of $Y$. Furthermore,
\[
 \textnormal{deg}_L(V):=\int_Y c_1(V)\cdot c_1(L)^{\textnormal{dim}(Y)-1}
\]
where $c_1(\cdot)$ refers to the first Chern class with values in an arbitrary Weil cohomology theory and the integral $\int_Y $ stands for the push-forward morphism to 
$\textnormal{Spec}\  l_0$ in that theory. Recall that $V$ is called semistable (with respect to $L$) if for every coherent subsheaf $W$ of $V$, we have $\mu(W)\leq \mu(V)$ and it is
called strongly semistable if $F^{n,*}_Y V $ is semistable for all $n\geq 0$.

In general, there exists a filtration 
\[
 0=V_0\subseteq V_1\subseteq \dots \subseteq V_{r-1}\subseteq V_r=V 
\]
of $V$ by subsheaves, such that the quotients $V_i/V_{i-1}$ are all semistable and such that the slopes $\mu(V_i/V_{i-1})$ are strictly decreasing for $i\geq 1$. 
This filtration is unique and is called the Harder-Narasimhan (HN) filtration of $V$. We will say that $V$ has a strongly semistable HN filtration if all the quotients
$V_i/V_{i-1}$ are strongly semistable. We shall write
\[
 \mu_{\textnormal{min}}(V):= \mu(V_r/V_{r-1})
\]
and
\[
 \mu_{\textnormal{max}}(V):=\mu(V_1).
\]
By the very definition of HN filtration, we have:
 \[
  V \text{ is semistable } \ \Leftrightarrow \ \mu_{\textnormal{min}}(V)= \mu_{\textnormal{max}}(V). 
 \]
An important consequence of the definitions is the following fact: if $V$ and $W$ are two torsion free sheaves on $Y$ and $\mu_{\textnormal{min}}(V) > 
\mu_{\textnormal{max}}(W)$,
then $\textnormal{Hom}_Y(V,W)=0$.


For more on the theory of semistable sheaves, see the monograph \cite{huybre}.

The following two theorems are key results from Langer 
\begin{theo}[{{\cite[Thm.\ 2.7]{langersem}}}] \label{lan}
 If $V$ is a torsion free coherent sheaf on $Y$, then there exists $n_0\geq 0$ such that $F^{n,*}_Y V$ has a strongly semistable HN filtration for all $n\geq n_0$. 
\end{theo}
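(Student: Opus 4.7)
The plan is to track the Harder–Narasimhan filtration of $F^{n,*}_Y V$ as $n$ grows and show that it eventually stabilizes in a strong sense. Since the absolute Frobenius scales slopes uniformly (every slope of a pullback by $F_Y$ is multiplied by a fixed power of $p$), the natural invariants to monitor are the rescaled quantities $\mu_{\max}(F^{n,*}_Y V)/p^n$, $\mu_{\min}(F^{n,*}_Y V)/p^n$, and more generally the rescaled slopes of all HN quotients at level $n$. A destabilizing subsheaf of $F_Y^* W$ for a semistable $W$ witnesses $\mu_{\max}(F_Y^* W) > p\,\mu(W)$, so the rescaled maximal slope is weakly increasing in $n$; dually the rescaled minimal slope is weakly decreasing.

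The central input is a bound on how much a single Frobenius pullback can destabilize. One aims to establish
\[
\mu_{\max}(F_Y^* W) - p\,\mu(W) \leq C
\]
for every semistable torsion free $W$ with $\textrm{rk}(W)\leq \textrm{rk}(V)$, where $C = C(Y,L,\textrm{rk}(V))$ is a constant depending only on the geometry of $(Y,L)$ and the rank. Proving such an inequality is the hard analytic core of the argument and what I expect to be the main obstacle: it is a positive-characteristic Bogomolov-type inequality for semistable sheaves combined with an effective restriction theorem to sufficiently general high-degree hypersurfaces, which allows one to argue by induction on $\dim Y$.

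Granted the bound, the weakly increasing sequence $\mu_{\max}(F^{n,*}_Y V)/p^n$ is bounded above, hence converges. To promote convergence to eventual constancy I would exploit that slopes of HN quotients of subsheaves of $F^{n,*}_Y V$ lie in $\tfrac{1}{r!}\matZ$ with $r := \textrm{rk}(V)$, so the rescaled slope at level $n$ is forced into $\tfrac{1}{r!\,p^n}\matZ$; a bounded, weakly increasing sequence valued in such denominators can converge only by being eventually constant. Applying the same reasoning inductively to each piece of the HN filtration of $F^{n,*}_Y V$ shows that the whole vector of rescaled HN slopes stabilizes at some $n_0$.

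Finally, once the rescaled slope data stabilizes at $n_0$, I would conclude that each graded piece $Q_i$ of the HN filtration of $F^{n_0,*}_Y V$ is strongly semistable. If $F_Y^* Q_i$ had a destabilizing subsheaf, the HN filtration at level $n_0+1$ would be a strict refinement of the Frobenius pullback of the one at level $n_0$, producing a rescaled slope strictly larger than some rescaled slope at level $n_0$ and contradicting the stabilization. Uniqueness of the HN filtration then forces $V_\bullet^{(n_0+k)} = F^{k,*}_Y V_\bullet^{(n_0)}$ for all $k\geq 0$, so the graded pieces of $V_\bullet^{(n_0)}$ remain semistable after every Frobenius pullback, yielding the desired $n_0$.
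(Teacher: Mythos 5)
The paper does not prove this statement: Theorem~\ref{lan} is quoted verbatim from A.~Langer (Theorem~2.7 in the reference \emph{Semistable sheaves in positive characteristic}), and the author simply cites it as a key external input. So there is no paper proof to compare against; what you have written is a sketch of a proof of Langer's theorem itself.

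Your outline correctly identifies the two weakly monotone rescaled sequences and correctly isolates the hard analytic core --- a uniform bound of the form $\mu_{\max}(F_Y^*W)-p\,\mu(W)\leq C$ for semistable $W$, obtained by a positive-characteristic Bogomolov inequality plus an effective restriction theorem and induction on $\dim Y$; this is indeed essentially Langer's Theorem~2.3/Corollary~2.5. However, the final ``discretization'' step has a genuine gap. You argue that the rescaled slope at level $n$ lies in $\tfrac{1}{r!\,p^n}\mathbb{Z}$ and that a bounded weakly increasing sequence with such values must be eventually constant. This is false, because the denominator grows with $n$: the sequence $a_n = C - \tfrac{1}{p^n}$ lies in $\tfrac{1}{p^n}\mathbb{Z}$, is strictly increasing, and is bounded, yet never stabilizes. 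Concretely, a strict increase at step $n$ only forces $\mu_{\max}(F^{n,*}_YV)-p\,\mu_{\max}(F^{(n-1),*}_YV)\geq \tfrac{1}{r^2}$ before rescaling, hence an increment of only $\tfrac{1}{r^2 p^n}$ after rescaling, and $\sum_n p^{-n}$ converges --- so infinitely many strict increases are consistent with boundedness. Langer's actual proof that the HN filtration of $F^{n,*}_YV$ eventually becomes the Frobenius pullback of the one at the previous level is more delicate and does not reduce to this counting argument; some further idea is needed to rule out infinitely many ever-smaller refinements. There is also a secondary issue with ``applying the same reasoning inductively to each piece'': when the HN filtration at level $n$ strictly refines the pullback of the one at level $n-1$, the pieces at the two levels do not match up, so the induction on pieces needs to be set up more carefully than stated.
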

If $V$ is a torsion free coherent sheaf on $Y$, we now define 
\[
 \bar{\mu}_{\textnormal{min}}(V):=\lim_{r\rightarrow\infty}\mu_{\textnormal{min}}(F^{r,*}_Y V) / \textnormal{char}(l_0)^r
\]
and
\[
  \bar{\mu}_{\textnormal{max}}(V):=\lim_{r\rightarrow\infty}\mu_{\textnormal{max}}(F^{r,*}_Y V) / \textnormal{char}(l_0)^r.
\]
Note that Theorem \ref{lan} implies that the two sequences $\mu_{\textnormal{min}}(F^{r,*}_Y V) / \textnormal{char}(l_0)^r$ and 
$\mu_{\textnormal{max}}(F^{r,*}_Y V) / \textnormal{char}(l_0)^r$ become constant when $r$ is sufficiently large, so the above definitions of $\bar{\mu}_{\textnormal{min}} $ 
and $\bar{\mu}_{\textnormal{max}} $ make sense.
Furthermore the sequences $\mu_{\textnormal{min}}(F^{r,*}_Y V) / \textnormal{char}(l_0)^r$ and $\mu_{\textnormal{max}}(F^{r,*}_Y V) / \textnormal{char}(l_0)^r$ are respectively
weakly 
decreasing and weakly increasing, therefore we have 
\[
  \mu_{\text{min}}(V)\geq \bar\mu_{\text{min}}(V)\  \text{ and } \ \bar \mu_{\text{max}}(V)\geq \mu_{\text{max}}(V). 
\]
Let us define \[\alpha(V):=\max\left\{\mu_{\text{min}}(V)- \bar\mu_{\text{min}}(V),\ \bar \mu_{\text{max}}(V)-\mu_{\text{max}}(V)\right\}.\]
\begin{theo}[{{\cite[Cor.\ 6.2]{langersem}}}]\label{langerstima}
If $V$ is of rank $r$, then
 \[
  \alpha(V)\leq\frac{r-1}{\textnormal{char}(l_0)} \max\left\{ \bar \mu_{\textnormal{max}}(\Omega_{Y/l_0}),0\right\}.
 \]
In particular, if $ \bar \mu_{\textnormal{max}}(\Omega_{Y/l_0})\geq 0$ and $\textnormal{char}(l_0)\geq d=\textnormal{dim} \ Y$,
\[
 \bar \mu_{\textnormal{max}}(\Omega_{Y/l_0})\leq \frac{\textnormal{char}(l_0)}{\textnormal{char}(l_0)+1-d}  \ \mu_{\textnormal{max}}(\Omega_{Y/l_0}).
\]
\end{theo}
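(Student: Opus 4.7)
My plan is to take the first, substantive inequality as a deep technical result which I would invoke from Langer's paper \cite{langersem}, and then derive the ``in particular'' statement as an elementary algebraic consequence of it.

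For the ``in particular'' part, apply the first inequality to $V=\Omega_{Y/l_0}$, whose rank equals $r=d=\dim Y$. Writing $p=\textnormal{char}(l_0)$ and using the hypothesis $\bar\mu_{\textnormal{max}}(\Omega_{Y/l_0})\geq 0$, the definition of $\alpha$ immediately gives
\[
\bar\mu_{\textnormal{max}}(\Omega_{Y/l_0})-\mu_{\textnormal{max}}(\Omega_{Y/l_0})\;\leq\;\alpha(\Omega_{Y/l_0})\;\leq\;\frac{d-1}{p}\,\bar\mu_{\textnormal{max}}(\Omega_{Y/l_0}).
\]
Rearranging produces $\bar\mu_{\textnormal{max}}(\Omega_{Y/l_0})\bigl(1-(d-1)/p\bigr)\leq \mu_{\textnormal{max}}(\Omega_{Y/l_0})$, and since the assumption $p\geq d$ makes the coefficient $(p+1-d)/p$ strictly positive, dividing by it yields the asserted bound.

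For the first inequality the natural approach has two phases. First, reduce by duality to bounding the single quantity $\bar\mu_{\textnormal{max}}(V)-\mu_{\textnormal{max}}(V)$: since $F_Y^{*}$ commutes with dualising locally free sheaves, the identities $\mu_{\textnormal{max}}(V^\vee)=-\mu_{\textnormal{min}}(V)$ and $\bar\mu_{\textnormal{max}}(V^\vee)=-\bar\mu_{\textnormal{min}}(V)$ show that applying such a bound to $V^\vee$, which has the same rank $r$, yields the bound on $\mu_{\textnormal{min}}(V)-\bar\mu_{\textnormal{min}}(V)$. Second, to control $\bar\mu_{\textnormal{max}}(V)-\mu_{\textnormal{max}}(V)$ one establishes a one-step estimate comparing $\mu_{\textnormal{max}}(F_Y^{*}V)$ to $p\,\mu_{\textnormal{max}}(V)$; the key mechanism is the canonical connection on a Frobenius pullback, whose second fundamental form provides, for any subsheaf $W\subseteq F_Y^{*}V$ not descending via Cartier to a subsheaf of $V$, a nonzero $\mathcal{O}_Y$-linear morphism $W\to (F_Y^{*}V/W)\otimes\Omega_{Y/l_0}$. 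Running this argument along the HN filtration of $F_Y^{*}V$ and using semistability of the graded pieces then bounds the increment in slope at each stage in terms of $\mu_{\textnormal{max}}(\Omega_{Y/l_0})$.

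The main obstacle, and the reason I would quote Langer rather than reproduce the proof, is the passage from this one-step estimate to the sharp form with $\bar\mu_{\textnormal{max}}(\Omega_{Y/l_0})$ in place of $\mu_{\textnormal{max}}(\Omega_{Y/l_0})$ and denominator $p$ in place of $p-1$: a naive telescoping across the tower of Frobenius pullbacks yields only the weaker constant $(r-1)/(p-1)$, and it is Langer's refined analysis of the HN polygon under iterated Frobenius, together with the stabilisation provided by Theorem \ref{lan}, which legitimises the passage to the limit and produces the optimal factor $(r-1)/p$.
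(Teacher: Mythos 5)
Your proposal matches the paper's treatment: the paper states this result without proof, citing it directly as a key result of Langer (Theorem 2.7 and Corollary 6.2 of \cite{langersem}), and your derivation of the ``in particular'' clause—applying the $\alpha$-bound to $\Omega_{Y/l_0}$ of rank $d$, using $\bar\mu_{\textnormal{max}}(\Omega_{Y/l_0})\geq 0$ to drop the $\max$, and rearranging $\bar\mu_{\textnormal{max}}-\mu_{\textnormal{max}}\leq\frac{d-1}{p}\bar\mu_{\textnormal{max}}$ using $p\geq d$ to keep the coefficient $\frac{p+1-d}{p}$ positive—is a correct elementary consequence. Your sketch of the mechanism behind the main inequality (duality, the Cartier connection, and the refinement beyond naive telescoping) is a fair summary, but like the paper you are ultimately quoting Langer for it, which is appropriate here.
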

We conclude this section with the following two lemmas from R\"ossler.
\begin{lemma}[{{\cite[Lemma 3.8]{rosstrongly}}}]\label{centr}
 Suppose that there is a closed $l_0$-immersion $i: Y \hookrightarrow B$, where $B$ is an abelian variety over $l_0$. Suppose that $\textnormal{Stab}_B(Y)=0$. Then
 $\Omega_Y^\vee$ is globally generated and for any dominant proper morphism $\phi:Y_0 \rightarrow Y$, where $Y_0$ is integral, we have $H^0(Y_0, \phi^*\Omega_Y^\vee)=0 $. 
 Furthermore,
 we have $\bar\mu_{\textnormal{min}}(\Omega_Y)>0$.
\end{lemma}
\begin{lemma}[{{\cite[Cor.\ 3.11]{rosstrongly}}}] \label{moret}
 Let $V$ be a finite rank, locally free sheaf over $Y$. Suppose that \\
 - for any surjective finite map $\phi: Y' \rightarrow Y$ with $Y' $ integral, we have $H^0(Y', \phi^*V)=0$, \\
 - $V^\vee$ is globally generated. \\
 Then  $H^0\left(Y, F^{n,*}_Y V\otimes \Omega_{Y/l_0}\right)=0$ for  $n$ sufficiently big. 
 
 Furthermore, 
 let
  $T\rightarrow Y$ be a torsor under 
  $V\left(F_{Y}^{n_0,*}V\right),$ where $n_0$ satisfies $H^0\left(Y, F^{n,*}_Y V\otimes \Omega_{Y/l_0}\right)=0$  for all $n>n_0$.
  Let $\phi:Y'  \rightarrow Y$ be a finite surjective morphism 
 and suppose that $Y'$ is integral. Then we have the implication: 
 
 $\phi^* T$ is a trivial $V\left(\phi^*(F^{n_0, *}_YV)\right)$-torsor $\Longrightarrow T$ is a trivial  
 $V\left(F_{Y}^{n_0,*}V\right)$-torsor.
\end{lemma}
The main ingredient of the proof of Lemma \ref{moret} is a result by Szpiro and Lewin-M\'en\'egaux which we will need later: 
\begin{prop}[{{\cite[exp. 2, Prop.\ 1]{pinceaux}}}]\label{lewin} If $V$ is a  vector bundle  over $Y$ such that
 $ H^0\left(Y, F^*_YV\otimes \Omega_{Y/l_0}\right)=0$, then 
 the map
\[
H^1(Y,V)\rightarrow H^1\left(Y,F^*_Y V\right)
\]
is injective.
\end{prop}
\section{Sparsity of $p$-divisible unramified liftings}
In this section we  prove our result on the sparsity of p-divisible unramified liftings (cf. Theorem \ref{torre} below).

Let $K$, $A$, $X$ and $U$ be as fixed in Notations and let $\text{Stab}_A(X)$ be trivial.
The construction of
the stabilizer commutes with the base change, so we have
\[
 \textnormal{Stab}_A(X)=\textnormal{Stab}_{\mathcal{A}}(\mathcal{X})\times_U \textnormal{Spec} K.
\]
Since $\textnormal{Stab}_A(X)$ is trivial, by generic flatness and finiteness, we can restrict the map 
$\pi: \textnormal{Stab}_{\mathcal{A}} (\mathcal{X})\rightarrow U$ to the inverse image of a non-empty open subscheme
$U'\subset U$ to obtain a finite flat commutative group scheme  of degree one \[\pi_{|\pi^{-1}(U')}: \pi^{-1}(U')\rightarrow U'.\] 
This implies that $\pi_{|\pi^{-1}(U')} $ is an isomorphism
and for any $\mathfrak{q}\in U'$ we have that 
$ \textnormal{Stab}_{A_{\mathfrak{q}^0}}(X_{\mathfrak{q}^0})$ 
is trivial. 
We will denote by $\tilde U\subseteq U$ the nonempty open subscheme 
\[
 \tilde U:=\left\{ \mathfrak{q}\in  U | \  \textnormal{Stab}_{A_{\mathfrak{q}^0}}(X_{\mathfrak{q}^0}) \ \textnormal{is trivial} \right\}.
\]
For any $\mathfrak{p}\in U$ we denote by $F_{\bar{k(\mathfrak{p})}}$ the Frobenius endomorphism on $\bar{k(\mathfrak{p})}$ and by $F_{R_1}$ the endomorphism 
of $R_1$ induced by $F_{\bar{k(\mathfrak{p})}}$ by functoriality.
We define 
\begin{align*}
 X_{\mathfrak{p}^0}'&:= X_{\mathfrak{p}^0}\times_{F_{\bar{k(\mathfrak{p})}}} \bar{k(\mathfrak{p})}  \\
 X_{\mathfrak{p}^1}'&:= X_{\mathfrak{p}^1}\times_{F_{R_1}} R_1 
\end{align*}
and we write
\[F_{X_{\mathfrak{p}^0}/\bar{k(\mathfrak{p})}}:X_{\mathfrak{p}^0}\rightarrow X_{\mathfrak{p}^0}' \]
for the relative Frobenius on $X_{\mathfrak{p}^0}$. For brevity's sake, from now on we will write \[\Omega_{X_{\mathfrak{p}^0}} \ \  \left(\text{resp. } \Omega_{X_{\mathfrak{p}^0}'},                                                                                                                                                                                        
\ \Omega_{X_{\mathfrak{p}^1}}, \
\Omega_{X_{\mathfrak{p}^1}'},\ \Omega_{X}\right)\]
instead of \[\Omega_{X_{\mathfrak{p}^0}/\bar{k(\mathfrak{p})}}\ \ \left(\text{resp. } \Omega_{X_{\mathfrak{p}^0}'/\bar{k(\mathfrak{p})}},\  \Omega_{X_{\mathfrak{p}^1}/R_1}, 
 \ \Omega_{X_{\mathfrak{p}^1}'/R_1}, \ 
\Omega_{X/K}\right).\]
Observe that since $ U$ is normal,  $\mathcal{A}$ is projective over $U$ (cf. Th.XI 1.4 in \cite{rayfai}).
Therefore there exists  a $U-$very ample line bundle $L$ on $\mathcal{X}$. 
For any $\mathfrak{p}\in U$ different from the generic point $\xi$, let us
denote by $L_{\mathfrak{p}}$ the inverse image of $L$ on $X_{\mathfrak{p}^0}$. Similarly we denote by $L_\xi$ the inverse image of $L$ on $X$.
From now on, for any vector bundle $G_{\mathfrak{p}}$ over $X_{\mathfrak{p}^0}$, we will  write 
$\text{deg}(G_\mathfrak{p})$ for the degree of $G_\mathfrak{p}$
with respect to $L_{\mathfrak{p}}$. Analogously, if $G_\xi$ is a vector bundle over $X$, we will write $\text{deg}(G_\xi)$ for the degree of $G_\xi$
with respect to $L_{\xi}$. Now consider the vector bundle $\Omega_{\mathcal{X}/U} $ over $\mathcal{X}$. For any natural number $m$, the map from $U$ to $\matZ$ defined by
\[
 \mathfrak{p}\mapsto  \chi\left((\Omega_{\mathcal{X}/U}\otimes L^m)_\mathfrak{p}\right)=\chi\left(\Omega_{X_{\mathfrak{p}^0}}\otimes L_{\mathfrak{p}}^m\right)
\]
and 
\[
 \xi\mapsto  \chi\left((\Omega_{\mathcal{X}/U}\otimes L^m)_\xi\right)=\chi\left(\Omega_{X}\otimes L_{\xi}^m\right)
\]
(here $\chi$ refers to the Euler characteristic)
is  constant on $U$ (cf. Ch.II, Sec.5 in \cite{mumab}). Therefore we have the equality
\[
 \chi\left(\Omega_{X_{\mathfrak{p}^0}}\otimes L_{\mathfrak{p}}^m\right)=\chi\left(\Omega_{X}\otimes L_{\xi}^m\right)
\]
for all $m\in \matN$ and for all $\mathfrak{p}\in U$. In other words, the Hilbert polynomial of $\Omega_{X_{\mathfrak{p}^0}} $ with respect to 
$L_{\mathfrak{p}} $ coincides with the Hilbert polynomial of $\Omega_{X} $ with respect to $L_{\xi} $. Since the degree of a vector bundle we defined at the 
beginning of this section
can be described in terms of its Hilbert polynomial (cf. Definition 1.2.11 in \cite{huybre}), we obtain that
 for every $\mathfrak{p}\in U$ we have $\text{deg}\left(\Omega_{X_{\mathfrak{p}^0}}\right)=
\text{deg}(\Omega_X)$.

 The following  lemma is a fundamental step to prove our sparsity Theorem \ref{torre}. 
\begin{lemma} \label{p}
 Let $K$, $A$, $X$ and $U$ be as fixed in Notations,  let $\textnormal{Stab}_A(X)$ be trivial and let $n$ be the dimension of $X$ over $K$. Then 
 \[
  \textnormal{Hom}_{X_{\mathfrak{p}^0}}\left(F_{X_{\mathfrak{p}^0}}^{k,*} \Omega_{X_{\mathfrak{p}^0}}, \Omega_{X_{\mathfrak{p}^0}} \right)=0
 \]
for any $k\geq 1$ and any $\mathfrak{p}\in \tilde U$ above a prime $p> n^2 \textnormal{deg}\left(\Omega_{X}\right) $.
\end{lemma}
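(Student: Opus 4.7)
The plan is to reduce $\textnormal{Hom}_{X_{\mathfrak{p}^0}}(F^{k,*}_{X_{\mathfrak{p}^0}}\Omega_{X_{\mathfrak{p}^0}}, \Omega_{X_{\mathfrak{p}^0}}) = 0$ to the slope inequality $\mu_{\min}(F^{k,*}_{X_{\mathfrak{p}^0}}\Omega_{X_{\mathfrak{p}^0}}) > \mu_{\max}(\Omega_{X_{\mathfrak{p}^0}})$, which forces the $\textnormal{Hom}$ to vanish because the image of any nonzero map would have slope simultaneously $\geq \mu_{\min}$ of the source and $\leq \mu_{\max}$ of the target.

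First I would establish $\bar\mu_{\min}(\Omega_{X_{\mathfrak{p}^0}}) > 0$ via Lemma \ref{centr}. Because $\mathfrak{p}\in\tilde U$, the stabilizer of $X_{\mathfrak{p}^0}$ in $A_{\mathfrak{p}^0}$ is trivial, so Fact \ref{app} gives $H^0(X_{\mathfrak{p}^0}, T_{X_{\mathfrak{p}^0}})=0$. Since $T_{A_{\mathfrak{p}^0}}|_{X_{\mathfrak{p}^0}}$ is trivial, Lemma \ref{easy} applied to the tangent sequence $0\to T_{X_{\mathfrak{p}^0}}\to T_{A_{\mathfrak{p}^0}}|_{X_{\mathfrak{p}^0}}\to N \to 0$ upgrades this to $H^0(Y', \phi^* T_{X_{\mathfrak{p}^0}})=0$ for every surjective finite $\phi: Y'\to X_{\mathfrak{p}^0}$ with $Y'$ integral. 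Dually $\Omega_{X_{\mathfrak{p}^0}}$ is globally generated as a quotient of the trivial bundle $\Omega_{A_{\mathfrak{p}^0}}|_{X_{\mathfrak{p}^0}}$, so Lemma \ref{centr} applied with $V:=T_{X_{\mathfrak{p}^0}}$ (so $V^\vee=\Omega_{X_{\mathfrak{p}^0}}$) gives $\mu_{\min}(\phi^*\Omega_{X_{\mathfrak{p}^0}})>0$ for every such $\phi$ with $Y'$ smooth, and in particular $\bar\mu_{\min}(\Omega_{X_{\mathfrak{p}^0}})>0$. Taking $\phi=\textrm{id}$ and observing that $\mu_{\min}$ is a positive rational with denominator at most $n$ yields the integral bound $\mu_{\min}(\Omega_{X_{\mathfrak{p}^0}})\geq 1/n$.

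Next I would extract the quantitative estimate. Writing $\delta:=\deg(\Omega_X)$, the lower bound $1/n$ on every HN slope of $\Omega_{X_{\mathfrak{p}^0}}$ forces the first HN piece $V_1$ of rank $r_1$ to satisfy $r_1\mu_{\max}(\Omega) + (n-r_1)/n \leq \delta$, hence $\mu_{\max}(\Omega_{X_{\mathfrak{p}^0}})\leq \delta-(n-1)/n$. Since $\mu_{\max}(\Omega)\geq\mu(\Omega)>0$ and $p>n^2\delta\geq n$, Theorem \ref{langerstima} delivers $\bar\mu_{\max}(\Omega)\leq \tfrac{p}{p+1-n}\mu_{\max}(\Omega)$ and $\mu_{\min}(\Omega)-\bar\mu_{\min}(\Omega)\leq \tfrac{n-1}{p}\bar\mu_{\max}(\Omega)$. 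Chaining these with the integrality bounds and the hypothesis $p>n^2\delta$ produces $p\,\bar\mu_{\min}(\Omega_{X_{\mathfrak{p}^0}}) > \mu_{\max}(\Omega_{X_{\mathfrak{p}^0}})$. Finally, the weak monotonicity of $r\mapsto\mu_{\min}(F^{r,*}V)/p^r$ yields, for every $k\geq 1$,
\[
\mu_{\min}(F^{k,*}\Omega_{X_{\mathfrak{p}^0}})\geq p^k\bar\mu_{\min}(\Omega_{X_{\mathfrak{p}^0}})\geq p\,\bar\mu_{\min}(\Omega_{X_{\mathfrak{p}^0}})>\mu_{\max}(\Omega_{X_{\mathfrak{p}^0}}),
\]
closing the argument.

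The hard part will be the algebraic book-keeping in the third step: the crude bound $\mu_{\max}(\Omega)\leq\delta$ is too loose to hit the threshold $p>n^2\delta$, so one must feed in the sharper $\mu_{\max}(\Omega)\leq \delta-(n-1)/n$ (coming from the already-established $\mu_{\min}(\Omega)\geq 1/n$) and verify that the resulting quadratic-in-$p$ inequality $p(p-n^2\delta)+(n-1)^2(p-1)+n(n-1)\delta>0$ is manifestly positive under the hypothesis.
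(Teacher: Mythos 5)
Your proposal is correct and follows essentially the same route as the paper. Both reduce the vanishing of the Hom-space to the slope inequality $\mu_{\min}(F^{k,*}\Omega_{X_{\mathfrak{p}^0}}) > \mu_{\max}(\Omega_{X_{\mathfrak{p}^0}})$, both invoke Fact \ref{app} via the tangent/conormal sequence and Lemma \ref{easy} to feed Lemma \ref{centr}, obtain $\bar\mu_{\min}(\Omega_{X_{\mathfrak{p}^0}})>0$ together with the integrality bound $\mu_{\min}(\Omega_{X_{\mathfrak{p}^0}})\geq 1/n$, and then exploit Theorem \ref{langerstima}. The two small differences are cosmetic: you compare $p\bar\mu_{\min}(\Omega)$ directly with $\mu_{\max}(\Omega)$ instead of passing through $\bar\mu_{\max}(\Omega)$ as the paper does (which changes the final algebraic inequality to the quadratic form you display, rather than the paper's cleaner $p>n^2\deg(\Omega)+(n-1-n^2)$), and you bound $\mu_{\max}$ by $\delta-(n-1)/n$ from the HN decomposition of the total degree rather than the paper's $\mu_{\max}\leq\deg(M)\leq\delta-1$ coming from $\deg(\Omega/M)\geq 1$. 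The paper's $\delta-1$ is actually the sharper of the two, but either is enough once one has $\mu_{\min}\geq 1/n$; your observation that the naive $\mu_{\max}\leq\delta$ is insufficient is correct, as one can check already for $n=3$. Your treatment also implicitly covers the semistable case (where $r_1=n$ and $\mu_{\max}=\delta/n\leq\delta-(n-1)/n$), which the paper handles as a separate, simpler branch.
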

\begin{proof}
 Let us notice first that, if $n=1$, then $X$ is a curve of genus $g$ at least $2$ and 
 \[
  \textnormal{Hom}_{X_{\mathfrak{p}^0}}\left(F_{X_{\mathfrak{p}^0}}^{k,*} \Omega_{X_{\mathfrak{p}^0}}, \Omega_{X_{\mathfrak{p}^0}} \right)=0
 \]
 is a simple consequence of the fact 
 \[
  \deg\left(F_{X_{\mathfrak{p}^0}}^{k,*} \Omega_{X_{\mathfrak{p}^0}}\right)=p^k(2g-2)>2g-2=\deg \Omega_{X_{\mathfrak{p}^0}}.
 \]
To treat the general case, 
let us fix  $\mathfrak{p}\in \tilde U$ above a prime $p> n^2 \textnormal{deg}\left(\Omega_{X}\right)$. We know that if 
 \[
  \mu_{\text{min}}\left(F_{X_{\mathfrak{p}^0}}^{k,*} \Omega_{X_{\mathfrak{p}^0}}\right)>\mu_{\text{max}}\left( \Omega_{X_{\mathfrak{p}^0}}\right)
 \]
then $\textnormal{Hom}_{X_{\mathfrak{p}^0}}\left(F_{X_{\mathfrak{p}^0}}^{k,*} \Omega_{X_{\mathfrak{p}^0}}, \Omega_{X_{\mathfrak{p}^0}} \right)=0$. Since
$ \mu_{\text{min}}\geq \bar\mu_{\text{min}}$ and $\bar \mu_{\text{max}}\geq \mu_{\text{max}} $, it is sufficient to show that, for every $k\geq 1$
\begin{equation}\label{bar}
  \bar\mu_{\text{min}}\left(F_{X_{\mathfrak{p}^0}}^{k,*} \Omega_{X_{\mathfrak{p}^0}}\right)>\bar \mu_{\text{max}}\left( \Omega_{X_{\mathfrak{p}^0}}\right).
 \end{equation}

%
Since $\text{Stab}_{A_{\mathfrak{p}^0}}(X_{\mathfrak{p}^0})$ is trivial, we can apply Lemma \ref{centr} to obtain $\bar\mu_{\text{min}}\left( \Omega_{X_{\mathfrak{p}^0}}\right)>0$.
In particular $\mu_{\text{min}}\left( \Omega_{X_{\mathfrak{p}^0}}\right)>0$ and $\text{deg}\left(\Omega_{X_{\mathfrak{p}^0}}\right)>0 $.
Using this and the equality $\bar\mu_{\text{min}}\left(F_{X_{\mathfrak{p}^0}}^{k,*} \Omega_{X_{\mathfrak{p}^0}}\right)=p^k
\bar\mu_{\text{min}}\left( \Omega_{X_{\mathfrak{p}^0}}\right)$, we see that (\ref{bar}) is implied by 
\begin{equation}\label{star}
 p\bar\mu_{\text{min}}\left( \Omega_{X_{\mathfrak{p}^0}}\right)>\bar\mu_{\text{max}}\left( \Omega_{X_{\mathfrak{p}^0}}\right).
\end{equation}
Theorem \ref{langerstima} gives us the following inequality

\[
 p\bar\mu_{\text{min}}\left( \Omega_{X_{\mathfrak{p}^0}}\right)\geq p\mu_{\text{min}}\left( \Omega_{X_{\mathfrak{p}^0}}\right)+(1-n) 
 \bar\mu_{\text{max}}\left( \Omega_{X_{\mathfrak{p}^0}}\right).
\]
so that  (\ref{star}) is  satisfied if
\begin{equation}\label{zt}
  p\mu_{\text{min}}\left( \Omega_{X_{\mathfrak{p}^0}}\right)>n  \bar\mu_{\text{max}}\left( \Omega_{X_{\mathfrak{p}^0}}\right).
\end{equation}
Since  $p> n^2 \textnormal{deg}\left(\Omega_{X}\right)\geq n, $ we can apply the second part of Theorem \ref{langerstima}   \[
                                       \bar\mu_{\text{max}}\left( \Omega_{X_{\mathfrak{p}^0}}\right)\leq \frac{p}{p+1-n}  
                                       \mu_{\text{max}}\left( \Omega_{X_{\mathfrak{p}^0}}\right),
                                      \]
 so that inequality (\ref{zt}) is implied by
\begin{equation}\label{john}
 (p+1-n)\mu_{\text{min}}\left( \Omega_{X_{\mathfrak{p}^0}}\right)>n \mu_{\text{max}}\left( \Omega_{X_{\mathfrak{p}^0}}\right).
\end{equation}

If $\Omega_{X_{\mathfrak{p}^0}}$ is semistable, (\ref{john}) gives $p>2n-1 $. Otherwise, we can  estimate  
$\mu_{\text{max}}\left( \Omega_{X_{\mathfrak{p}^0}}\right)$ and $\mu_{\text{min}}\left( \Omega_{X_{\mathfrak{p}^0}}\right) $ in the following way.
We know that 
\[ 
\mu_{\text{max}}\left( \Omega_{X_{\mathfrak{p}^0}}\right)=\frac{\text{deg}(M)}{\text{rk}(M)}
\]
for some subsheaf $0\neq M \subsetneq  \Omega_{X_{\mathfrak{p}^0}}$. 
Therefore we have 
 $\mu_{\text{max}}\left( \Omega_{X_{\mathfrak{p}^0}}\right)\leq \text{deg}(M)$. Furthermore, $\deg \left( \Omega_{X_{\mathfrak{p}^0}}/M\right)>0$, since 
 $ \bar\mu_{\text{min}}\left( \Omega_{X_{\mathfrak{p}^0}}\right)>0 $. This and the additivity of the degree on short exact sequences gives us 
 \[
  \mu_{\text{max}}\left( \Omega_{X_{\mathfrak{p}^0}}\right)\leq \text{deg}(M)\leq \text{deg}\left(\Omega_{X_{\mathfrak{p}^0}}\right)-1.
 \]
 Similarly,  
 \[ 
\mu_{\text{min}}\left( \Omega_{X_{\mathfrak{p}^0}}\right)=\frac{\text{deg}(Q)}{\text{rk}(Q)}
\]
for some $Q$ quotient of $\Omega_{X_{\mathfrak{p}^0}}$, so $\mu_{\text{min}}\left( \Omega_{X_{\mathfrak{p}^0}}\right)\geq 1/n$.
Inequality (\ref{john}) is then implied by
\[
 p>n^2\text{deg}\left(\Omega_{X_{\mathfrak{p}^0}}\right)+(n-1-n^2).
\]
Since $n-1-n^2$ is always negative, we are reduced to $ p>n^2\text{deg}\left(\Omega_{X_{\mathfrak{p}^0}}\right)$. Now 
$\text{deg}\left(\Omega_{X_{\mathfrak{p}^0}}\right)$ is greater or equal to one, so
$n^2\text{deg}\left(\Omega_{X_{\mathfrak{p}^0}}\right)\geq 2n-1$ for any $n$. This ensures us that the condition 
\[
  p>n^2\text{deg}\left(\Omega_{X_{\mathfrak{p}^0}}\right)
\]
is sufficient to have  $\mu_{\text{min}}\left(F_{X_{\mathfrak{p}^0}}^{k,*} \Omega_{X_{\mathfrak{p}^0}}\right)>\mu_{\text{max}}\left( \Omega_{X_{\mathfrak{p}^0}}\right)$
for every $k\geq1$ whether $\Omega_{X_{\mathfrak{p}^0}}$ is semistable or not. To conclude it is enough to remember that 
$\text{deg}\left(\Omega_{X_{\mathfrak{p}^0}}\right)$ coincides with $\text{deg}\left(\Omega_{X}\right)$.
\end{proof}
\begin{cor}
 The map \[
          H^1\left(X_{\mathfrak{p}^0},F_{X_{\mathfrak{p}^0}}^*\Omega_{X_{\mathfrak{p}^0}}^{\vee}\right)\rightarrow
          H^1\left(X_{\mathfrak{p}^0},F_{X_{\mathfrak{p}^0}}^{k,*}\Omega_{X_{\mathfrak{p}^0}}^{\vee}\right)
         \]
is injective for every $k\geq 1$ and every $\mathfrak{p}\in \tilde U$ above a prime $p> n^2\textnormal{deg}\left(\Omega_{X}\right)$.
\end{cor}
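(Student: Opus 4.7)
The plan is to iterate the Szpiro--Lewin-Ménégaux injectivity statement recalled in the proof of Lemma \ref{moret}, namely: for any locally free sheaf $V$ on a smooth projective $Y$ over a perfect field of positive characteristic, the natural map $H^1(Y,V)\to H^1(Y,F_Y^*V)$ is injective as soon as $H^0(Y,F_Y^*V\otimes\Omega_{Y/l_0})=0$. I would apply this with $Y=X_{\mathfrak{p}^0}$ and, successively, $V=F_{X_{\mathfrak{p}^0}}^{i,*}\Omega_{X_{\mathfrak{p}^0}}^{\vee}$ for $i=1,2,\dots,k-1$, and then compose the resulting injections to obtain the desired map.

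The key verification is that each hypothesis $H^0\bigl(X_{\mathfrak{p}^0},F_{X_{\mathfrak{p}^0}}^{i+1,*}\Omega_{X_{\mathfrak{p}^0}}^{\vee}\otimes\Omega_{X_{\mathfrak{p}^0}}\bigr)=0$ holds under the numerical condition $p>n^2\deg(\Omega_X)$. Using the standard Hom--tensor identification
\[
H^0\bigl(X_{\mathfrak{p}^0},F_{X_{\mathfrak{p}^0}}^{i+1,*}\Omega_{X_{\mathfrak{p}^0}}^{\vee}\otimes\Omega_{X_{\mathfrak{p}^0}}\bigr)
\;\simeq\;\textnormal{Hom}_{X_{\mathfrak{p}^0}}\bigl(F_{X_{\mathfrak{p}^0}}^{i+1,*}\Omega_{X_{\mathfrak{p}^0}},\Omega_{X_{\mathfrak{p}^0}}\bigr),
\]
this is precisely the vanishing supplied by Lemma \ref{p} (applied with the integer $i+1\geq 1$).

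Putting the pieces together, for every $i\in\{1,\dots,k-1\}$ the arrow
\[
H^1\bigl(X_{\mathfrak{p}^0},F_{X_{\mathfrak{p}^0}}^{i,*}\Omega_{X_{\mathfrak{p}^0}}^{\vee}\bigr)\longrightarrow H^1\bigl(X_{\mathfrak{p}^0},F_{X_{\mathfrak{p}^0}}^{i+1,*}\Omega_{X_{\mathfrak{p}^0}}^{\vee}\bigr)
\]
is injective; composing these $k-1$ injections yields the claimed injectivity of the map from $H^1(X_{\mathfrak{p}^0},F_{X_{\mathfrak{p}^0}}^*\Omega_{X_{\mathfrak{p}^0}}^{\vee})$ to $H^1(X_{\mathfrak{p}^0},F_{X_{\mathfrak{p}^0}}^{k,*}\Omega_{X_{\mathfrak{p}^0}}^{\vee})$. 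There is no serious obstacle: the only substantive input is Lemma \ref{p}, and the mild subtlety is simply to make sure one invokes it with the correct shift in the Frobenius exponent when rewriting the $H^0$ as a Hom group.
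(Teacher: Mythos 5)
Your proof is correct and is essentially the same as the paper's: both iterate the Szpiro--Lewin-Ménégaux injectivity criterion step by step, using Lemma \ref{p} (via the Hom--tensor identification) to verify the required $H^0$-vanishing at each stage, and then compose the resulting injections.
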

\begin{proof}
  Lemma \ref{p} and Proposition \ref{lewin} imply that
  \[
          H^1\left(X_{\mathfrak{p}^0},F_{X_{\mathfrak{p}^0}}^{h,*}\Omega_{X_{\mathfrak{p}^0}}^{\vee}\right)\rightarrow
          H^1\left(X_{\mathfrak{p}^0},F_{X_{\mathfrak{p}^0}}^{h+1,*}\Omega_{X_{\mathfrak{p}^0}}^{\vee}\right)
         \]
is injective for every $h\geq 0$. Therefore the composition
\begin{center}
\begin{tikzpicture}[description/.style={fill=white,inner sep=2pt}, ciao/.style={fill=red,inner sep=2pt}]
\matrix (m) [matrix of math nodes, row sep=3.5em,
column sep=1.5em, text height=1.5ex, text depth=0.25ex]
{    H^1\left(X_{\mathfrak{p}^0},F_{X_{\mathfrak{p}^0}}^*\Omega_{X_{\mathfrak{p}^0}}^{\vee}\right)  & 
 H^1\left(X_{\mathfrak{p}^0},F_{X_{\mathfrak{p}^0}}^{2,*}\Omega_{X_{\mathfrak{p}^0}}^{\vee}\right) & ... &
  H^1\left(X_{\mathfrak{p}^0},F_{X_{\mathfrak{p}^0}}^{k,*}\Omega_{X_{\mathfrak{q}^0}}^{\vee}\right) \\
  };
	\path[right hook->](m-1-1) edge (m-1-2);
	\path[right hook->](m-1-2) edge (m-1-3);
	\path[right hook->](m-1-3) edge (m-1-4);
\end{tikzpicture}
\end{center}
is an injective map.
\end{proof}
We are now ready to prove our sparsity result.
\begin{theo} \label{torre}
With the same hypotheses as in Lemma \ref{p},
 for any $\mathfrak{p}\in \tilde U$ above a prime $p> n^2\textnormal{deg}\left(\Omega_{X}\right)$,
the set
\[
 \left\{ P\in X_{\mathfrak{p}^0}(R_0) \ | \ P \textnormal{ lifts to an element of }  p A_{\mathfrak{p}^1}(R_1)\cap
 X_{\mathfrak{p}^1}(R_1)\right\}
\]
is not Zariski dense in $X_{\mathfrak{p}^0}$.
\end{theo}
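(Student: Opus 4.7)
My plan is to assume the contrary and derive a contradiction by producing a lift of the absolute Frobenius of $X_{\mathfrak{p}^0}$ to $R_1$, which Lemma~\ref{p} (via the Cartier isomorphism, Theorem~\ref{cartier}) forbids. The set in the theorem statement consists precisely of the $\bar{k(\mathfrak{p})}$-points of $\textnormal{Exc}^1(\mathcal{X},\mathcal{A})$. Suppose it is Zariski dense. Since the morphism $\textnormal{Crit}^1(\mathcal{X},\mathcal{A}) \to X_{\mathfrak{p}^0}$ is finite (affine and proper, as noted in Section~3), its image is closed and so equals $X_{\mathfrak{p}^0}$. I would pick an irreducible component $C$ of $\textnormal{Crit}^1$ surjecting onto $X_{\mathfrak{p}^0}$ and pass to its normalization, yielding a proper surjective morphism $\phi:C\to X_{\mathfrak{p}^0}$ with $C$ integral.

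The next step uses the torsor structure of Section~3. The tautological inclusion $C\hookrightarrow \textnormal{Gr}_1(X_{\mathfrak{p}^1})$ furnishes a section of the pullback $\phi^*\textnormal{Gr}_1(X_{\mathfrak{p}^1})\to C$, which is therefore a trivial torsor under $V(\phi^*F^*_{X_{\mathfrak{p}^0}}\Omega_{X_{\mathfrak{p}^0}})$. To descend this triviality to $X_{\mathfrak{p}^0}$ I would apply Lemma~\ref{moret} with $V:=\Omega^\vee_{X_{\mathfrak{p}^0}}$. The hypotheses of Lemma~\ref{centr} are met: $\Omega_{X_{\mathfrak{p}^0}}$ is globally generated as a quotient of $i^*\Omega_{A_{\mathfrak{p}^0}}\simeq \mathcal{O}^n_{X_{\mathfrak{p}^0}}$, and $H^0(Y',\phi'^*\Omega^\vee_{X_{\mathfrak{p}^0}})=0$ for any finite surjective $\phi':Y'\to X_{\mathfrak{p}^0}$ follows by combining Fact~\ref{app} from the Appendix with Lemma~\ref{easy}. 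The crucial vanishing $H^0(X_{\mathfrak{p}^0},F^{n,*}\Omega^\vee\otimes\Omega)=\textnormal{Hom}(F^{n,*}\Omega,\Omega)=0$ for $n\geq 1$ is exactly Lemma~\ref{p}. The output is that the Greenberg torsor $\textnormal{Gr}_1(X_{\mathfrak{p}^1})\to X_{\mathfrak{p}^0}$ itself admits a global section.

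The final step converts the section --- together with the constraint that $C\subseteq [p]_*\textnormal{Gr}_1(A_{\mathfrak{p}^1})$ --- into a Frobenius lift. Multiplication-by-$p$ on the abelian scheme $A_{\mathfrak{p}^1}$ factors as $[p]=V\circ F$ through the relative Frobenius $F$ and the Verschiebung $V$; thus a section of $\textnormal{Gr}_1(X_{\mathfrak{p}^1})\to X_{\mathfrak{p}^0}$ with values in $[p]_*\textnormal{Gr}_1(A_{\mathfrak{p}^1})$ upgrades to an $R_1$-morphism $\tilde F:X_{\mathfrak{p}^1}\to X_{\mathfrak{p}^1}$ whose reduction mod $p$ is the absolute Frobenius of $X_{\mathfrak{p}^0}$. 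The differential $d\tilde F$ vanishes mod~$p$, so $d\tilde F/p$ descends to an $\mathcal{O}_{X_{\mathfrak{p}^0}}$-linear map $F^*\Omega_{X_{\mathfrak{p}^0}}\to \Omega_{X_{\mathfrak{p}^0}}$; via Theorem~\ref{cartier} this map is nonzero, since its induced action on cohomology is the inverse Cartier operator, an isomorphism. This contradicts $\textnormal{Hom}_{X_{\mathfrak{p}^0}}(F^*\Omega,\Omega)=0$ from Lemma~\ref{p}.

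The principal obstacle will be this last step: making precise how a section of the Greenberg torsor taking values in $[p]_*\textnormal{Gr}_1(A_{\mathfrak{p}^1})$ really does encode a lift of Frobenius, which demands careful bookkeeping of the Verschiebung factorization on $A_{\mathfrak{p}^1}$ modulo $p^2$ in conjunction with the embedding $X\hookrightarrow A$. A secondary subtlety is that the bundle $V(F^*\Omega)$ underlying the Greenberg torsor does not literally take the form $V(F^{n_0,*}V)$ appearing in Lemma~\ref{moret}; one must either reformulate the torsor through the dual bundle --- at which point the injectivity $H^1(F^*\Omega^\vee)\hookrightarrow H^1(F^{k,*}\Omega^\vee)$ of the corollary preceding the theorem applies directly --- or invoke the Szpiro--Lewin--M\'en\'egaux injectivity with a carefully tailored input bundle.
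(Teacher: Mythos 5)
The reduction to triviality of the Greenberg torsor (Steps 1 and 2 of your proposal) is exactly the paper's argument: pick an irreducible component of $\textnormal{Crit}^1(\mathcal{X},\mathcal{A})$ dominating $X_{\mathfrak{p}^0}$, note that its inclusion into $\textnormal{Gr}_1(X_{\mathfrak{p}^1})$ trivializes the pulled-back torsor, and apply Lemma~\ref{moret} (fed by Lemma~\ref{p}) to descend triviality to $X_{\mathfrak{p}^0}$. So far so good.

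Your final step, however, contains a genuine gap, and the obstacle you flag there is real but your sketch of how to resolve it would not work. You say the section of $\textnormal{Gr}_1(X_{\mathfrak{p}^1})\to X_{\mathfrak{p}^0}$, ``together with the constraint that $C\subseteq [p]_*\textnormal{Gr}_1(A_{\mathfrak{p}^1})$,'' upgrades to a Frobenius lift via the factorization $[p]=V\circ F$. Two problems. First, Lemma~\ref{moret} produces \emph{some} section of the torsor $\textnormal{Gr}_1(X_{\mathfrak{p}^1})\to X_{\mathfrak{p}^0}$, but this section has no reason to land inside $[p]_*\textnormal{Gr}_1(A_{\mathfrak{p}^1})$: the descent of triviality does not carry along membership in $\textnormal{Crit}^1$, so you cannot assume the section has values there. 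Second, and more fundamentally, the Verschiebung factorization of $[p]$ on the abelian scheme is not the mechanism by which a Frobenius lift arises, and invoking it is a red herring. The correct and crucial point --- what makes the argument work for arbitrary $X$ --- is that \emph{any} section of the level-one Greenberg torsor $\pi_1:\textnormal{Gr}_1(X_{\mathfrak{p}^1})\to X_{\mathfrak{p}^0}$ already yields a lift of the Frobenius of $X_{\mathfrak{p}^0}$ over $R_1$, with no input from $[p]_*$ at all. The paper gets this from the adjunction defining the Greenberg transform: a section $\sigma$ over $\bar{k(\mathfrak{p})}$ corresponds to an $R_1$-morphism $\bar\sigma:\mathbb{W}_1(X_{\mathfrak{p}^0})\to X_{\mathfrak{p}^1}$, and precomposing with the canonical $R_1$-morphism $t:X_{\mathfrak{p}^1}\to\mathbb{W}_1(X_{\mathfrak{p}^0})$ given on structure sheaves by $(a_0,a_1)\mapsto \tilde a_0^p+p\tilde a_1$ produces $\bar\sigma\circ t:X_{\mathfrak{p}^1}\to X_{\mathfrak{p}^1}$, which one checks lifts $F_{X_{\mathfrak{p}^0}}$ by a direct diagram chase. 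The remainder of your last step (vanishing of $d\tilde F$ mod $p$, dividing by $p$, nonvanishing via the Cartier isomorphism, contradiction with Lemma~\ref{p}) is then exactly the paper's conclusion and is fine. The secondary subtlety you raise about matching the Greenberg torsor to the form $V(F_Y^{n_0,*}V)$ required by Lemma~\ref{moret} is resolved simply by taking $n_0=1$ and the appropriate choice of $V$ (the same choice under which Lemma~\ref{p} furnishes the required $H^0$ vanishing); there is no need to invoke Szpiro--Lewin--M\'en\'egaux separately here, as that injectivity is already packaged inside the proof of Lemma~\ref{moret}.
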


\begin{proof}
 Let us fix $\mathfrak{p}$ as in the hypotheses. Since \[\textnormal{Crit}^1(\mathcal{X},\mathcal{A})(R_0)= p A_{\mathfrak{p}^1}(R_1)\cap
 X_{\mathfrak{p}^1}(R_1),\] we have that 
 \[
  \left\{ P\in X_{\mathfrak{p}^0}(R_0) \ | \ P \text{ lifts to an element of }  p A_{\mathfrak{p}^1}(R_1)\cap
 X_{\mathfrak{p}^1}(R_1)\right\}
 \]
 coincides with the image of $\textnormal{Crit}^1(\mathcal{X},\mathcal{A})(R_0)\rightarrow  X_{\mathfrak{p}^0}
 (R_0)$. 
 
Let us assume by contradiction that this image is dense in $ X_{\mathfrak{p}^0}
 (R_0)$. This implies that $\pi_1:\textnormal{Gr}_{1}(X_{\mathfrak{p}^{1}}) \rightarrow  X_{\mathfrak{p}^0}$ is a trivial torsor: the argument we use to show this is
 taken from R\"ossler (cf. beginning of the proof of Theorem 2.2 in \cite{rosstrongly}). First of all the closed map 
$
 \text{Crit}^1(\mathcal{X},\mathcal{A}) \rightarrow X_{\mathfrak{p}^0}
$
is surjective and so  we can choose an irreducible component
\[\text{Crit}^1(\mathcal{X},\mathcal{A})_0 \hookrightarrow
\text{Crit}^1(\mathcal{X},\mathcal{A})\]
which dominates $X_{\mathfrak{p}^0}$. 
Lemma \ref{centr} and Lemma \ref{p} allow us to apply the second part of Lemma \ref{moret} with 
$V=\Omega^\vee_{X_{\mathfrak{p}^0}}$, $Y=X_{\mathfrak{p}^0}$,  $n_0=1$, $T=\textnormal{Gr}_{1}(X_{\mathfrak{p}^{1}}) $  and $\phi$ equal to 
$ \text{Crit}^1(\mathcal{X},\mathcal{A})_0 \rightarrow X_{\mathfrak{p}^0}. $
We have that $\phi^*\textnormal{Gr}_{1}(X_{\mathfrak{p}^{1}})$ is trivial as $V\left(\phi^*F^{*}_{X_{\mathfrak{p}^0}}\Omega_{X_{\mathfrak{p}^0}}^\vee \right) $-torsor, since 
$\text{Crit}^1(\mathcal{X},\mathcal{A})_0$ is contained in $ \textnormal{Gr}_{1}(X_{\mathfrak{p}^{1}}).$
Hence  $\pi_1:\textnormal{Gr}_{1}(X_{\mathfrak{p}^{1}}) \rightarrow  X_{\mathfrak{p}^0}$ is trivial as 
$V\left(F^{*}_{X_{\mathfrak{p}^0}}\Omega_{X_{\mathfrak{p}^0}}^\vee\right)$-torsor. 
Let us take a section
$\sigma: X_{\mathfrak{p}^0} \rightarrow \textnormal{Gr}_{1}(X_{\mathfrak{p}^{1}})$.
By definition of Greenberg transform,
the map $\sigma$ over $R_0 $ corresponds to a map $\bar\sigma:\mathbb{W}_1(X_{\mathfrak{p}^0})\rightarrow X_{\mathfrak{p}^1} $ over $R_1$. We can precompose 
$\bar\sigma$ with the morphism $t:X_{\mathfrak{p}^1}\rightarrow \mathbb{W}_1(X_{\mathfrak{p}^0}) $ corresponding to
\begin{align*}
 W_1(\mathcal{O}_{X_{\mathfrak{p}^0}})&\rightarrow \mathcal{O}_{X_{\mathfrak{p}^1}} \\
 (a_0,a_{1}) &\mapsto \tilde a_0^p+\tilde a_1 p 
\end{align*}
where $\tilde a_i$ lifts $a_i$. 
Consider now the following diagram 
\begin{center}
\begin{tikzpicture}[description/.style={fill=white,inner sep=2pt}, ciao/.style={fill=red,inner sep=2pt}]
\matrix (m) [matrix of math nodes, row sep=3.5em,
column sep=3.5em, text height=1.5ex, text depth=0.25ex]
{   X_{\mathfrak{p}^1}  & \mathbb{W}_1( X_{\mathfrak{p}^0}) & X_{\mathfrak{p}^1} \\
    X_{\mathfrak{p}^0} &  X_{\mathfrak{p}^0} & X_{\mathfrak{p}^0} \\
  };
	\path[->,font=\scriptsize]
		(m-2-1) edge node[auto] {$  $} (m-1-1)
		(m-2-1) edge node[auto] {$F_{X_{\mathfrak{p}^0}}  $} (m-2-2)
		(m-2-2) edge node[auto] {$  $} (m-1-2)
		(m-1-2) edge node[auto] {$\bar \sigma  $} (m-1-3)
		(m-2-3) edge node[auto] {$  $} (m-1-3)
		(m-2-2) edge node[auto] {$ \text{Id} $} (m-2-3)
		;
	\path[->,font=\scriptsize] 
		(m-1-1) edge node[above] {$ t $} (m-1-2)
                ;
\end{tikzpicture}
\end{center}
Its left square  is commutative, since the composition 
\begin{center}
\begin{tikzpicture}[description/.style={fill=white,inner sep=2pt}, ciao/.style={fill=red,inner sep=2pt}]
\matrix (m) [matrix of math nodes, row sep=3.5em,
column sep=3.5em, text height=1.5ex, text depth=0.25ex]
{    X_{\mathfrak{p}^0} & X_{\mathfrak{p}^1}  &  \mathbb{W}_1(X_{\mathfrak{p}^0}) \\
  };
	\path[->,font=\scriptsize] 
		(m-1-1) edge node[above] {$ $} (m-1-2)
		(m-1-2) edge node[above] {$ t$} (m-1-3)
                ;
\end{tikzpicture}
\end{center}
simply corresponds to the map
\begin{align*}
 W_1(\mathcal{O}_{X_{\mathfrak{p}^0}})&\rightarrow \mathcal{O}_{X_{\mathfrak{p}^0}} \\
 (a_0,a_{1}) &\mapsto  a_0^p.
\end{align*}
For the commutativity of the right square, notice that by the very definition of the 
transition morphism $\pi_1:\textnormal{Gr}_{1}(X_{\mathfrak{p}^{1}}) \rightarrow  X_{\mathfrak{p}^0}$ we have a commutative diagram
\begin{center}
\begin{tikzpicture}[description/.style={fill=white,inner sep=2pt}, ciao/.style={fill=red,inner sep=2pt}]
\matrix (m) [matrix of math nodes, row sep=3.5em,
column sep=3.5em, text height=1.5ex, text depth=0.25ex]
{  \textnormal{Hom}_{R_1}(\mathbb{W}_1( X_{\mathfrak{p}^0}), X_{\mathfrak{p}^1}) & 
\textnormal{Hom}_{R_0}(X_{\mathfrak{p}^0},\textnormal{Gr}_{1}( X_{\mathfrak{p}^1})) \\
   \textnormal{Hom}_{R_0}(X_{\mathfrak{p}^0},X_{\mathfrak{p}^0}) &  \\
  };
	\path[->,font=\scriptsize]
		(m-1-1) edge node[left] {$ \textnormal{reduction mod. }p $} (m-2-1)
		(m-1-2) edge node[auto] {$ (\pi_1\circ -) $} (m-2-1)
		;
	\path[->,font=\scriptsize] 
		(m-1-1) edge node[above] {$  $} (m-1-2)
                ;
\end{tikzpicture}
\end{center}
In particular, $\text{Id}_{X_{\mathfrak{p}^0}}=\pi_1\circ \sigma= (\textnormal{reduction mod. }p)(\bar \sigma),$
which is exactly what we wanted to verify.

We obtain therefore that
$\bar\sigma \circ t:X_{\mathfrak{p}^1}\rightarrow  X_{\mathfrak{p}^1}$ is a lift of the Frobenius $ F_{X_{\mathfrak{p}^0}} $.

The diagram below is also commutative
\begin{center}
\begin{tikzpicture}[description/.style={fill=white,inner sep=2pt}, ciao/.style={fill=red,inner sep=2pt}]
\matrix (m) [matrix of math nodes, row sep=3.5em,
column sep=3.5em, text height=1.5ex, text depth=0.25ex]
{   X_{\mathfrak{p}^1}  & \mathbb{W}_1( X_{\mathfrak{p}^0}) & X_{\mathfrak{p}^1} \\
   \text{Spec}(R_1)  &  \text{Spec}(R_1) & \text{Spec}(R_1) \\
  };
	\path[->,font=\scriptsize]
		(m-1-1) edge node[auto] {$  $} (m-2-1)
		(m-2-1) edge node[auto] {$F_{R_1}  $} (m-2-2)
		(m-1-2) edge node[auto] {$  $} (m-2-2)
		(m-1-2) edge node[auto] {$\bar \sigma  $} (m-1-3)
		(m-1-3) edge node[auto] {$  $} (m-2-3)
		(m-2-2) edge node[auto] {$ \text{Id} $} (m-2-3)
		;
	\path[->,font=\scriptsize] 
		(m-1-1) edge node[above] {$ t $} (m-1-2)
                ;
\end{tikzpicture}
\end{center}
In fact, by definition, $\bar \sigma$ is a morphism over 
$R_1  $, so the right square is commutative. 
The commutativity of the left square is easy to check, since we know explicitely $t$ and $F_{R_1}$.
Therefore $\bar\sigma \circ t $ is a lift of the Frobenius $ F_{X_{\mathfrak{p}^0}} $ compatible with $F_{R_1} $: this implies the existence of a morphism of $R_1$-schemes
\[
 \tilde F: X_{\mathfrak{p}^1}\rightarrow  X_{\mathfrak{p}^1}' 
\]
lifting the relative Frobenius $F_{X_{\mathfrak{p}^0}/R_0}$. 

As shown in part (b) of the proof of Th\'eor\`eme 2.1 in \cite{illusie},
since 
the image of 
$\tilde F^*:\Omega_{X_{\mathfrak{p}^1}'}\rightarrow \tilde F_*
\Omega_{X_{\mathfrak{p}^1}}$ is contained in $p \tilde F_*
\Omega_{X_{\mathfrak{p}^1}}$ and the multiplication by $p$ induces an isomorphism $p:F_{X_{\mathfrak{p}^0}/R_0,*}
\Omega_{X_{\mathfrak{p}^0}}\xrightarrow{\sim} p \tilde F_*
\Omega_{X_{\mathfrak{p}^1}},$ there exists a unique map
\[
 f:=p^{-1}\tilde F^*:\Omega_{X_{\mathfrak{p}^0}'}\rightarrow F_{X_{\mathfrak{p}^0}/R_0,*}
\Omega_{X_{\mathfrak{p}^0}}
\]
making the diagram below commutative
\begin{center}
\begin{tikzpicture}[description/.style={fill=white,inner sep=2pt}, ciao/.style={fill=red,inner sep=2pt}]
\matrix (m) [matrix of math nodes, row sep=3.5em,
column sep=3.5em, text height=1.5ex, text depth=0.25ex]
{  \Omega_{X_{\mathfrak{p}^1}'}  & p \tilde F_*
\Omega_{X_{\mathfrak{p}^1}} \\
   \Omega_{X_{\mathfrak{p}^0}'}  &  F_{X_{\mathfrak{p}^0}/R_0,*}
\Omega_{X_{\mathfrak{p}^0}}  \\
  };
	\path[->,font=\scriptsize]
		(m-2-1) edge node[auto] {$f  $} (m-2-2)
		(m-2-2) edge node[auto] {$ p $} (m-1-2)
		;
	\path[->>,font=\scriptsize]
	(m-1-1) edge node[auto] {$  $} (m-2-1);
	\path[->,font=\scriptsize] 
		(m-1-1) edge node[above] {$ \tilde F^* $} (m-1-2)
                ;
\end{tikzpicture}
\end{center}
Proposition 3 in \cite{xin} states that  the adjoint of $f$
\[
 \bar f:  F_{X_{\mathfrak{p}^0}}^*\Omega_{X_{\mathfrak{p}^0}}=F_{X_{\mathfrak{p}^0}/R_0}^*\Omega_{X_{\mathfrak{p}^0}'}\rightarrow
\Omega_{X_{\mathfrak{p}^0}}
\]
is generically bijective.
This clearly contradicts Lemma \ref{p}. 
\end{proof}

\section{The number of irreducible components of the critical scheme of complete intersections}
In this last section we provide an upper bound for the number of irreducible components of the critical scheme 
$\text{Crit}^1(\mathcal{X},\mathcal{A}) $  in the case in which $X$ is a smooth complete intersection.

Let $A/K$ be an abelian variety of dimension $n$ and let $L$ be a very ample line bundle on 
$A$. Let $c\in \matN$ be positive and  let $H_1$, $H_2, ...,H_c \in 
|L|$ be general. We define $X:=H_1 \cap H_2 \cap ... \cap H_c$.
Suppose that $X$ is smooth. 

Let us take a sufficiently small open $V\subseteq \text{Spec}(\mathcal{O}_K) $ 
such that $A$ extends over $V$ to an abelian scheme $\mathcal{A}$, $L$ extends
to a $V$-very ample line bundle $\mathcal{L}$, $H_i$ 
extends to $\mathcal{H}_i$ for every $i$ and $\mathcal{X}:=\mathcal{H}_1\cap \mathcal{H}_2\cap ...\cap \mathcal{H}_c$ is smooth.  
We can restrict $V$ if necessary and suppose 
$K/\matQ$ is unramified at $\mathfrak{p}$.
%
\begin{theo} \label{eff}
 Let $K$ be a number field, $A/K$ be an abelian variety of dimension $n$ and let $L$ be a very ample line bundle on 
$A$. Let $c\in \matN$ be positive and  let $H_1$, $H_2, ...,H_c \in 
|L|$ be general. Suppose that $X:=H_1 \cap H_2 \cap ... \cap H_c$ is smooth.
If $\mathfrak{p}$ is in the open subscheme 
$V$ defined above,
then the number of irreducible components of $\textnormal{Crit}^1(\mathcal{X},\mathcal{A}) $ is bounded by 
\[
 p^{2n}\left(\sum_{h=0}^{n-c}{2n-2c\choose h}{c \choose n-c-h}p^{n-c-h} \right) (L^n)^2.
\]
\end{theo}
(Here $ (L^n)$ denotes the intersection number of $L$)
\begin{proof}
To obtain Theorem \ref{eff}, we follow step by step Buium's approach to prove the Manin-Mumford conjecture for curves (cf. \cite{buiumvol}, Theorem 1.11): we first show that
$\textnormal{Crit}^1(\mathcal{X},\mathcal{A}) $
can be realized as the intersection of two projective varieties (see $\mathbb{P}(E_X) $ and $ \textnormal{[p]}_*\text{Gr}_1(A_{\mathfrak{p}^1})$ below) 
and then use the product of their degrees to bound the number of its irreducible components.
Since $X$ is not  necessarily of dimension one, 
the computation of the degree of $\mathbb{P}(E_X) $ is slightly more demanding here
than the correspondent one in Buium's work.

Let us fix $\mathfrak{p}\in V$. 
The torsor
    $\text{Gr}_1(X_{\mathfrak{p}^1})\rightarrow X_{\mathfrak{p}^0}$, (resp. $\text{Gr}_1(A_{\mathfrak{p}^1})\rightarrow A_{\mathfrak{p}^0})$)
    corresponds
to an element $\eta_X\in H^1\left(X_{\mathfrak{p}^0},F^{*}_{X_{\mathfrak{p}^0}}\Omega_{X_{\mathfrak{p}^0}}^\vee\right)$,   (resp. 
$\eta_A\in H^1\left(A_{\mathfrak{p}^0},F^{*}_{A_{\mathfrak{p}^0}}\Omega_{A_{\mathfrak{p}^0}}^\vee\right)).$
Under the natural isomorphisms  \\

{\centering
  $ \displaystyle
    \begin{aligned} 
     H^1\left(X_{\mathfrak{p}^0},F^{*}_{X_{\mathfrak{p}^0}}\Omega_{X_{\mathfrak{p}^0}}^\vee\right)&\simeq \textnormal{Ext}^1
\left(F^{*}_{X_{\mathfrak{p}^0}}\Omega_{X_{\mathfrak{p}^0}},
 \mathcal{O}_{X_{\mathfrak{p}^0}}\right) \\ 
H^1\left(A_{\mathfrak{p}^0},F^{*}_{A_{\mathfrak{p}^0}}\Omega_{A_{\mathfrak{p}^0}}^\vee\right)&\simeq \textnormal{Ext}^1
\left(F^{*}_{A_{\mathfrak{p}^0}}\Omega_{A_{\mathfrak{p}^0}},
 \mathcal{O}_{A_{\mathfrak{p}^0}}\right)
    \end{aligned}
  $
\par} $ $ \\
\normalsize{$\eta_X$ and} $\eta_A$  correspond to extensions
of vector bundles \\

{\centering
  $ \displaystyle
    \begin{aligned} 
     0\rightarrow \mathcal{O}_{X_{\mathfrak{p}^0}}&\rightarrow E_X \rightarrow F^{*}_{X_{\mathfrak{p}^0}}\Omega_{X_{\mathfrak{p}^0}} \rightarrow 0 \\
  0\rightarrow \mathcal{O}_{A_{\mathfrak{p}^0}}&\rightarrow E_A \rightarrow F^{*}_{A_{\mathfrak{p}^0}}\Omega_{A_{\mathfrak{p}^0}} \rightarrow 0.
    \end{aligned}
  $
\par} $ $ \\ 
\normalsize{For} any locally free sheaf  $W$ over a base $S$ of finite type over a field, we  shall write  $\mathbb{P}(W)$ for the projective bundle associated to  $W$, i.e 
the $S$-scheme representing the functor on $S$-schemes
\begin{align*}
 T\longmapsto \{ \text{iso. classes of surjective morphisms of }&\mathcal{O}_T\textnormal{-modules }W_T\rightarrow Q, \ \\
 &\textnormal{where } Q \ \textnormal{ is locally free of rank 1}\}.
\end{align*}

As showed in paragraph 1 of \cite{md}, the two extensions above give us two divisors \\

{\centering
  $ \displaystyle
    \begin{aligned} 
    D_X&:=\mathbb{P}\left(F^{*}_{X_{\mathfrak{p}^0}}\Omega_{X_{\mathfrak{p}^0}}\right)\subseteq \mathbb{P}(E_X) \\
    D_A&:=\mathbb{P}\left( F^{*}_{A_{\mathfrak{p}^0}}\Omega_{A_{\mathfrak{p}^0}}\right)\subseteq \mathbb{P}(E_A)
    \end{aligned}
  $
\par} $ $ \\ 
belonging respectively to the linear systems $|\mathcal{O}_{\mathbb{P}(E_X)}(1)|$ and $|\mathcal{O}_{\mathbb{P}(E_A)}(1)|$, and  \\

{\centering
  $ \displaystyle
    \begin{aligned} 
    \text{Gr}_1(X_{\mathfrak{p}^1})&\simeq \mathbb{P}(E_X)\setminus D_X \\
 \text{Gr}_1(A_{\mathfrak{p}^1})&\simeq \mathbb{P}(E_A)\setminus D_A.
    \end{aligned}
  $
\par} $ $ \\
If $i$ denotes the closed immersion $i:X_{\mathfrak{p}^0} \rightarrow A_{\mathfrak{p}^0} $,
then it is not difficult to show that there is a natural restriction homomorphism 
$i^* E_A \rightarrow E_X $ prolonging the homomorphism $i^*\Omega_{A_{\mathfrak{p}^0}} \rightarrow  \Omega_{X_{\mathfrak{p}^0}}$. The homomorphism 
$i^* E_A \rightarrow E_X $ is clearly surjective, so it induces a closed immersion $j:\mathbb{P}(E_X)\hookrightarrow \mathbb{P}(E_A)$ prolonging 
$ \text{Gr}_1(X_{\mathfrak{p}^1})\hookrightarrow  \text{Gr}_1(A_{\mathfrak{p}^1})$.
Therefore we have a commutative diagram
\footnotesize{
\begin{center}
\begin{tikzpicture}[description/.style={fill=white,inner sep=2pt}, ciao/.style={fill=red,inner sep=2pt}]
\matrix (m) [matrix of math nodes, row sep=3.5em,
column sep=3.5em, text height=1.5ex, text depth=0.25ex]
{ & \textnormal{[p]}_*\text{Gr}_1(A_{\mathfrak{p}^1}) \\
\text{Gr}_1(X_{\mathfrak{p}^1}) & \text{Gr}_1(A_{\mathfrak{p}^1}) \\  
   \mathbb{P}(E_X) & \mathbb{P}(E_A) \\
   X_{\mathfrak{p}^0}  &  A_{\mathfrak{p}^0} \\
  };
	\path[->,font=\scriptsize]		
		(m-3-1) edge node[auto] {$ \pi_X $} (m-4-1)
		(m-3-2) edge node[auto] {$ \pi_A $} (m-4-2)
		;
		\path [>=latex,->,font=\scriptsize] (m-1-2) edge [bend left=60] node[auto] {$T$} (m-4-2);
	\path[right hook->,font=\scriptsize]
	        (m-2-2) edge node[auto] {$  $} (m-3-2)
	        (m-2-1) edge node[auto] {$  $} (m-3-1)
	        (m-1-2) edge node[auto] {$  $} (m-2-2)
		(m-2-1) edge node[above] {$  $} (m-2-2)
		(m-3-1) edge node[above] {$ j $} (m-3-2)
		(m-4-1) edge node[above] {$ i $} (m-4-2)
                ;
\end{tikzpicture}
\end{center}
}
\normalsize{
Let} us denote by $\mathcal{L}_{\mathfrak{p}}$  the base change of $\mathcal{L}$ to $A_{\mathfrak{p}^0} $. It is standard to prove that  
\[
 \mathcal{H}:=\pi_A^*\mathcal{L}_{\mathfrak{p}}\otimes\mathcal{O}_{\mathbb{P}(E_A)}(1)
\]
is very ample on $\mathbb{P}(E_A)$ (cf. pag 4 in \cite{buiumvol}). 
We have
\[
 \mathcal{H}|_{\mathbb{P}(E_X)}=\pi_X^* i^*\mathcal{L}_{\mathfrak{p}}\otimes \mathcal{O}_{\mathbb{P}(E_X)}(1)
\]
\[
 \mathcal{H}|_{[p]_*\text{Gr}_1(A_{\mathfrak{p}^1})}=T^*\mathcal{L}_{\mathfrak{p}}
\]
since $D_A\in|\mathcal{O}_{\mathbb{P}(E_A)}(1)| $ and $[p]_*\text{Gr}_1(A_{\mathfrak{p}^1})\subseteq \text{Gr}_1(A_{\mathfrak{p}^1})\simeq \mathbb{P}(E_A)\setminus D_A$.
We know that $[p]_*\text{Gr}_1(A_{\mathfrak{p}^1})$ is the maximal abelian subvariety of  $\text{Gr}_1(A_{\mathfrak{p}^1}) $ and we also know that
the multiplication by $p$ map on $\text{Gr}_1(A_{\mathfrak{p}^1}) $ factors through the isogeny $T$. This implies that 
$T$ has degree at most $p^{2n}$, so we have the following estimate
\[
 \deg_{\mathcal{H}}([p]_*\text{Gr}_1(A_{\mathfrak{p}^1}))\leq p^{2n} (\mathcal{L}_{\mathfrak{p}}^n).
\]
Let us now consider $\deg_{\mathcal{H}}(\mathbb{P}(E_X))$. It coincides with
\begin{equation}\label{boh}
 \int_{\mathbb{P}(E_X)}c_1( \mathcal{H}|_{\mathbb{P}(E_X)})^{2n-2c}
\end{equation}
where $c_1$ stands for the first Chern class in the Chow ring and $ \int_{\mathbb{P}(E_X)}$ stands for the push-forward morphism to $ \text{Spec}(R_0) $ in the Chow theory.
Since \[c_1( \mathcal{H}|_{\mathbb{P}(E_X)})=c_1(\pi_X^* i^*\mathcal{L}_{\mathfrak{p}})+ c_1( \mathcal{O}_{\mathbb{P}(E_X)}(1))\] we can re-write (\ref{boh}) as
\[
 \int_{\mathbb{P}(E_X)}\sum_{h=0}^{2n-2c}{2n-2c\choose h} c_1(\pi_X^* i^*\mathcal{L}_{\mathfrak{p}})^h \cdot c_1( \mathcal{O}_{\mathbb{P}(E_X)}(1))^{2n-2c-h}.
\]
Equivalently
\[
\int_{X_{\mathfrak{p}^0}}\sum_{h=0}^{2n-2c}{2n-2c\choose h} c_1( i^*\mathcal{L}_{\mathfrak{p}})^h \cdot \pi_{X,*}\left(c_1( \mathcal{O}_{\mathbb{P}(E_X)}(1))^{2n-2c-h}\right)
\]
and by definition of Segre class this is
\[
 \int_{X_{\mathfrak{p}^0}}\sum_{h=0}^{2n-2c}{2n-2c\choose h} c_1( i^*\mathcal{L}_{\mathfrak{p}})^h \cdot s_{n-c-h}(E^\vee_X). 
\]
Notice that the Segre classes of the  dual of $E_X$ appear in our formula: this is due to the fact that we are not using Fulton's geometric notation 
for the projective bundle associated to a vector bundle (cf. Note at the end of B.5.5 in \cite{ful}).
Since $s_k=0$ if $k<0$,  we end up with
\[
\int_{X_{\mathfrak{p}^0}}\sum_{h=0}^{n-c}{2n-2c\choose h} c_1( i^*\mathcal{L}_{\mathfrak{p}})^h \cdot s_{n-c-h}(E^\vee_X).
\]
Now the exact sequence
\[
  0\rightarrow \mathcal{O}_{X_{\mathfrak{p}^0}}\rightarrow E_X \rightarrow F^{*}_{X_{\mathfrak{p}^0}}\Omega_{X_{\mathfrak{p}^0}} \rightarrow 0
\]
implies
\[
 s_{n-c-h}(E^\vee_X)=
 s_{n-c-h}\left(F^{*}_{X_{\mathfrak{p}^0}}\Omega^\vee_{X_{\mathfrak{p}^0}}\right)
\]
and so
\[
 s_{n-c-h}(E^\vee_X)=
 p^{n-c-h}s_{n-c-h}(\Omega^\vee_{X_{\mathfrak{p}^0}})
\]
(here we have used the following fact: the pullback of a cycle $\eta$ of codimension $j$ through the Frobenius map coincides with $p^j \eta$). 
Therefore we have to study the following sum
\begin{equation} \label{er}
 \sum_{h=0}^{n-c}{2n-2c\choose h}p^{n-c-h} c_1( i^*\mathcal{L}_{\mathfrak{p}})^h \cdot s_{n-c-h}(\Omega^\vee_{X_{\mathfrak{p}^0}}).
\end{equation}
The short exact sequence
\[
 0\rightarrow \Omega_{X_{\mathfrak{p}^0}}^{\vee}\rightarrow  i^*\Omega_{A_{\mathfrak{p}^0}}^{\vee}
 \rightarrow N\rightarrow 0
\]
(where $N$ is the normal bundle for $i$)
gives
\[
 c_t(\Omega^\vee_{X_{\mathfrak{p}^0}})c_t(N)=c_t(i^*\Omega^\vee_{A_{\mathfrak{p}^0}})=1,
\]
so that 
$ c_t(N)=s_t(\Omega^\vee_{X_{\mathfrak{p}^0}}).$
Recalling that 
\[
 c_t(N)=\left(1+c_1(i^*\mathcal{L}_{\mathfrak{p}})t\right)^c
\]
we obtain
\[
 s_{n-c-h}(\Omega^\vee_{X_{\mathfrak{p}^0}})=c_{n-c-h}(N) 
 ={c \choose n-c-h} c_1(i^*\mathcal{L}_{\mathfrak{p}})^{n-c-h}.
\]
Substituting in (\ref{er}), we obtain
\begin{align*}
  \left(\sum_{h=0}^{n-c}{2n-2c\choose h}  {c \choose n-c-h}p^{n-c-h}\right)c_1(i^*\mathcal{L}_{\mathfrak{p}})^{n-c}.
\end{align*}
Therefore $\deg_{\mathcal{H}}(\mathbb{P}(E_X))$ is
\[
   \left(\sum_{h=0}^{n-c}{2n-2c\choose h}  {c \choose n-c-h}p^{n-c-h}\right)
 \int_{X_{\mathfrak{p}^0}} c_1(i^*\mathcal{L}_{\mathfrak{p}})^{n-c}.
\]
Since  $X_{\mathfrak{p}^0}=H_{1,\mathfrak{p}}\cap ...\cap
H_{c,\mathfrak{p}}$ where 
 $H_{1,\mathfrak{p}},...,H_{c,\mathfrak{p}}$ belong to $|\mathcal{L}_{\mathfrak{p}}|$, we have
\[
\int_{X_{\mathfrak{p}^0}} c_1(i^*\mathcal{L}_{\mathfrak{p}})^{n-c}= \int_{A_{\mathfrak{p}^0}}c_1(\mathcal{L}_{\mathfrak{p}})^n=(\mathcal{L}_{\mathfrak{p}}^n)
\]
and
\[
 \deg_{\mathcal{H}}(\mathbb{P}(E_X))=\left(\sum_{h=0}^{n-c}{2n-2c\choose h}{c \choose n-c-h}p^{n-c-h}\right) (\mathcal{L}_{\mathfrak{p}}^n).
\]
Now B\'ezout's theorem in Fulton's form (cf. \cite{ful}, p. 148) says that the number of irreducible components in the intersection of two projective varieties of degrees
$d_1,d_2$ cannot exceed $d_1d_2$. In particular, the number of irreducible components of $\textnormal{Crit}^1(\mathcal{X},\mathcal{A}) $ is less or equal than
\[
  p^{2n}\left(\sum_{h=0}^{n-c}{2n-2c\choose h}{c \choose n-c-h}p^{n-c-h}\cdot \right) (\mathcal{L}_{\mathfrak{p}}^n)^2.
\]

\normalsize{Notice that $ (\mathcal{L}_{\mathfrak{p}}^n)=(L^n)$, by the same reasoning done before  Lemma \ref{p}.

}
\end{proof}
\begin{rema}
 One can consider any intersection
 $X:=H_1 \cap H_2 \cap ... \cap H_c $ where $H_i\in |L_i|$ for some very ample line bundles $L_i$.
 In this more general case, the computations in our proof become a bit more complex, but it is still  possible to give an explicit bound for
 the number of irreducible components of $\textnormal{Crit}^1(\mathcal{X},\mathcal{A}) $. We have
 \[
  c_j(N)=\sum_{1\leq i_1<...<i_j\leq c} \ \prod_{k=i_1}^{i_j}  c_1(i^*\mathcal{L}_{k,\mathfrak{p}})
 \]
which implies
\begin{equation*} \label{citan}
 s_{n-c-h}(\Omega^\vee_{X_{\mathfrak{p}^0}})=\sum_{1\leq i_1<...<i_{n-c-h}\leq c} \ \prod_{k=i_1}^{i_{n-c-h}}  c_1(i^*\mathcal{L}_{k,\mathfrak{p}}).
\end{equation*}
Therefore, defining $\mathcal{H}:=\pi_A^*\mathcal{L}_{1,\mathfrak{p}}\otimes\mathcal{O}_{\mathbb{P}(E_A)}(1) $, then $\deg_{\mathcal{H}}(\mathbb{P}(E_X))$ is
\[
  \sum_{h=0}^{n-c}{2n-2c\choose h}p^{n-c-h}\sum_{1\leq i_1<...<i_{n-c-h}\leq c}  \left( \int_{X_{\mathfrak{p}^0}} c_1(i^*\mathcal{L}_{1,\mathfrak{p}})^{h}
  \prod_{k=i_1}^{i_{n-c-h}}c_1(i^*\mathcal{L}_{k,\mathfrak{p}})\right).
\]
We have
\[
 \int_{X_{\mathfrak{p}^0}} c_1(i^*\mathcal{L}_{1,\mathfrak{p}})^{h}
  \prod_{k=i_1}^{i_{n-c-h}}c_1(i^*\mathcal{L}_{k,\mathfrak{p}}) 
 =I_{i_1,...,i_{n-c-h}}
\]
where $I_{i_1,...,i_{n-c-h}}$ is the following intersection number
\[
  I_{i_1,...,i_{n-c-h}}:=(\overbrace{L_1\cdots L_1}^\text{h+1 times} \cdot L_2 \cdot L_3 \cdots L_c
  \cdot L_{i_1}\cdot L_{i_2}\cdots L_{i_{n-c-h}}).
\]
We obtain that $\deg_{\mathcal{H}}(\mathbb{P}(E_X))$ is 
\[
 \sum_{h=0}^{n-c}{2n-2c\choose h}p^{n-c-h}\sum_{1\leq i_1<...<i_{n-c-h}\leq c}  
 I_{i_1,...,i_{n-c-h}}.
\]
and therefore the number of irreducible components of $\textnormal{Crit}^1(\mathcal{X},\mathcal{A}) $ is bounded by:
\[
 p^{2n} (L_1^n)\sum_{h=0}^{n-c}{2n-2c\choose h}p^{n-c-h}\sum_{1\leq i_1<...<i_{n-c-h}\leq c}  
 I_{i_1,...,i_{n-c-h}}.
\]

\end{rema}
\section*{Acknowledgements}
This paper constitutes part of my PhD thesis. I would like to thank Prof. D. R\"ossler
for having introduced me to this topic and for his invaluable mentoring advice.

I am also indebted to the anonymous referee for his/her useful remarks.

This work was partly supported by the SFB 1085 “Higher Invariants” at the
University of Regensburg.
\bibliography{bibliotatevol}{}
\bibliographystyle{alpha}

\end{document}